\newtheorem{theorem}{Theorem}[section]
\newtheorem{lemma}[theorem]{Lemma}
\newtheorem{proposition}[theorem]{Proposition}
\theoremstyle{definition}
\newtheorem{definition}[theorem]{Definition}
\newtheorem{example}[theorem]{Example}
\newtheorem{remark}[theorem]{Remark}
\numberwithin{equation}{section}
\newcommand{\B}{\mathbb B}
\newcommand{\R}{\mathbb R}
\newcommand{\N}{\mathbb N}
\newcommand{\Sph}{\mathbb S}
\newcommand{\X}{\mathbb X}
\newcommand{\Y}{\mathbb Y}
\newcommand{\nullv}{{\bf 0}}
\newcommand{\inte}{{\rm int}\,}
\newcommand{\fron}{{\rm bd}\,}
\newcommand{\dom}{{\rm dom}\,}
\newcommand{\grph}[1]{{\rm grph}(#1)}
\newcommand{\epi}[1]{{\rm epi}(#1)}
\newcommand{\Inv}[1]{{#1}^{-\sharp}}
\newcommand{\SFix}[1]{{\rm SFix}(#1)}
\newcommand{\por}[1]{\le_{{}_{#1}}}
\newcommand{\ball}[2]{{\rm B}(#1, #2)}
\newcommand{\balll}[2]{{\rm B}\left(#1, #2\right)}
\newcommand{\dist}[2]{{\rm dist}\left(#1,#2\right)}
\newcommand{\exc}[2]{{\rm exc}\left(#1,#2\right)}
\newcommand{\excf}[2]{{\rm exc}_{#1,#2}}
\newcommand{\Haus}[2]{{\rm Haus}(#1, #2)}
\newcommand{\Inc}[2]{{\rm Inc}(#1, #2)}
\title[On a strong covering property of multivalued mappings]{On a strong covering property of multivalued mappings}
\author[A. Uderzo]{Amos Uderzo}
\address[A. Uderzo]{University of Milano-Bicocca, Dept. of
Mathematics and Applications, Milan, Italy}
\email{{\tt amos.uderzo@unimib.it}}
\keywords{Open mapping theorem, set-covering mappings, Lipschitzian
behaviour, set-inclusion point, Ekeland variational principle, exact
penalization.}
\subjclass[2010]{49J53, 47H04, 49K27, 90C48}
\begin{document}

\begin{abstract}
In this paper, a strong variant for multivalued mappings of the
well-known property of openness at a linear rate  is studied.
Among other examples, a simply characterized class of closed
convex processes between Banach spaces, which satisfies
such a covering behaviour, is singled out.
Equivalent reformulations of this property and its stability under
Lipschitz perturbations are investigated in a metric space setting.
Applications to the solvability of set-valued inclusions and to the
exact penalization of optimization problems with set-inclusion
constraints are discussed.
\end{abstract}

\maketitle

\begin{flushright}
to the memory of Aleksander Moiseevich Rubinov (1940-2006)
\end{flushright}

\vskip1cm


\section{Introduction and preliminaries}

The property of being surjective describes an elementary
set-theoretic behaviour of mappings which, in synergy with
specific (topological, metric, linear, and so on)
structures on the domain and
on the range sets, may afford valuable consequences.
A paradigma of this phenomenon can be seen in the celebrated
Banach-Schauder open mapping theorem for linear
operators: in a context, where the metric completeness
interacts with linearity and continuity, the property of a mapping
of being onto turns out to imply its openness (images of open sets remain open).
Even though some quantitative estimates did appear already in the
original statement of this result (see, for instance, Theorem 10
in Ch. X of \cite{Bana32}),
the merely topological formulation of it left somehow hidden certain
metric aspects of this ``openness preservation law". Nonetheless, the potential
of them was understood some years later, when the open
mapping theorem was extended in a local form to nonlinear
mappings by L.A. Lyusternik (see \cite{Lyus34}) and L.M. Graves
(see \cite{Grav50}). Their far-reaching extensions paved the
way to enlightening the interconnections of the quantitative surjective
behaviour of a mapping with the Lipschitzian behaviour of its
inverse and with error bounds for the solution set to the related
generalized equations (see \cite{Mord93,Peno89}), and, consequently,
to explore links with the metric fixed point theory (see \cite{Arut07,DonRoc14,Ioff14}).
This led to distil the notion of metric regularity as an essential
tool for the analysis of various stability and sensitivity issues
in modern variational analysis, optimization and control theory (see historical
comments in \cite{DonRoc14,Ioff00,Mord06,RocWet98}).
As a consequence, the covering behaviour of single as well
as of set-valued mappings has been the main subject of many
investigations (see, among the others, \cite{Arut07,BorZhu88,
DmMiOs80,Fran87,Mord93,MorSha95,YeYaKi98}). In them, depending on possible
applications or on specific issues of the related theory to be
investigated, several variations of the concept of covering
behaviour itself have been considered: for instance, 
a reader will find openness at a linear or at a more general rate,
local covering, global covering, openness restricted to given sets,
point based openness, linear semiopenness, and so on.

In the present paper, a strong variant of the notion of openness
at a linear rate in a metric space setting is considered, which
applies only to multivalued mappings. Roughly speaking, such a
property postulates that the whole enlargement of images through
a given mapping are covered by the image of a single element near
the reference point, instead of by the image of an entire ball
around it. Such a requirement clearly imposes
severe restrictions on the covering behaviour of a
set-valued mapping, yet it happens to be fulfilled in various contexts,
which are relevant to variational analysis and optimization.
Furthermore, as shown in the present study, it exhibits nice
robustness features in the presence of various types of
perturbations.
Other motivations for the interest in this strong covering behaviour
come from set-inclusion problems, namely generalized
equations where the inclusion of a single element into images
of a given multifunction is replaced by the inclusion of an entire
set. With respect to such kind of problems, it seems that
solvability and solution stability can be hardly approached
as far as working with conventional covering notions.

The contents of the paper are organized as follows. In the rest
of the current section, basic notations, preliminary notions
and related facts, that will be employed throughout the paper,
are recalled. In Section \ref{Sect:2} the main covering property
under study is introduced. Several contexts in which it emerges
are discussed. In particular, the class of closed convex processes
having this behaviour is singled out. Then, conditions for
such a property to hold are established in a metric space setting.
Section \ref{Sect:3} is devoted to explore some applications to the
existence of set-inclusion points, with related error bound
estimates, and to the exact penalization of constrained optimization
problems.

Whenever $x$ is an element of a metric space $(X,d)$ and
$r$ is a positive real, $\ball{x}{r}=\{z\in X:\ d(z,x)\le r\}$ denotes
the closed ball with center $x$ and radius $r$. By $\dist{x}{S}=
\inf_{z\in S}d(z,x)$ the distance of $x$ from a subset $S\subseteq
X$ is denoted, with the convention that $\dist{x}{\varnothing}
=+\infty$. The $r$-enlargement of a set $S\subseteq X$ is indicated by
$\ball{S}{r}=\{x\in X:\ \dist{x}{S}\le r\}$. Given sets
$A,\, B\subseteq X$, the excess of $A$ over $B$ is indicated by
$\exc{A}{B}=\sup_{a\in A}\dist{a}{B}$, while the Hausdorff distance of
$A$ and $B$ by $\Haus{A}{B}=\max\{\exc{A}{B},\exc{B}{A}\}$.
Recall that a set-valued mapping $\Phi:X\rightrightarrows Y$
between metric spaces is said to be Lipschitz on $X$ with
constant $l\ge 0$ provided
$$
   \exc{\Phi(x_1)}{\Phi(x_2)}\le ld(x_1,x_2),\quad\forall
   x_1,\, x_2\in X.
$$
If the above inequality is satisfied in a neighbourhood of a
given point $\bar x\in X$, $\Phi$ is said to be locally Lipschitz
around $\bar x$.
A set-valued mapping $\Psi:X\rightrightarrows Y$
between metric spaces is called Hausdorff upper semicontinuous
(henceforth, u.s.c.) at $x_0\in X$ is for every $\epsilon>0$ there
exists $\delta>0$ such that $\Psi(x)\subseteq\ball{\Psi(x_0)}{\epsilon}$,
for every $x\in\ball{x_0}{\delta}$.
The domain and the graph of $\Psi:X\rightrightarrows Y$ are denoted
by $\dom\Psi$ and $\grph{\Psi}$, respectively. Throughout the paper,
any mapping $\Psi:X\rightrightarrows Y$ will be assumed to have
$\dom\Psi=X$ and to take closed values, unless otherwise stated.
In any vector space, the null element is marked by $\nullv$, and
the related notations $\B=\ball{\nullv}{1}$ and $\Sph=\fron\B=\B\backslash
\inte\B$ are adopted, where $\inte$ and $\fron$ indicate the topological
interior and boundary of a given set, respectively.

\begin{remark}    \label{rem:Lipexc}
In the sequel, the following consequence of the Lipschitz
property of a set-valued mapping on its excess function will
be used: if $\Phi$
is Lipschitz on $X$ with constant $l$, then for any nonempty
set $S\subseteq Y$, the function $x\mapsto\exc{\Phi(x)}{S}$
is Lipschitz on $X$ with the same constant $l$. Indeed, for every $x_1,
\,x_2\in X$ it is true that
\begin{eqnarray*}
  \exc{\Phi(x_2)}{S} &=& \sup_{y\in\Phi(x_2)}\dist{y}{S}\le
  \sup_{y\in\ball{\Phi(x_1)}{l d(x_1,x_2)}}\dist{y}{S}  \\
  &\le& \sup_{y\in\ball{\Phi(x_1)}{l d(x_1,x_2)}}\dist{y}{\Phi(x_1)}+
   \exc{\Phi(x_1)}{S}   \\
  &\le & l d(x_1,x_2)+ \exc{\Phi(x_1)}{S}.
\end{eqnarray*}
\end{remark}

Another property of the excess function associated with a pair
of set-valued mappings, that will be used in the sequel, is stated
next.

\begin{lemma}      \label{lem:lscexc}
Let $\Psi:X\rightrightarrows Y$ and $\Phi:X\rightrightarrows Y$
be given set-valued mappings between metric spaces. Suppose
that:
\begin{enumerate}

\item[(i)] $\Psi$ is Hausdorff u.s.c. at $x_0\in X$;

\item[(ii)] $\Phi$ is Lipschitz on $X$.

\end{enumerate}

\noindent Then, the function $\excf{\Phi}{\Psi}:X\longrightarrow
[0,+\infty)$ defined as
$$
    \excf{\Phi}{\Psi}(x)=\exc{\Phi(x)}{\Psi(x)},\quad x\in X,
$$
is lower semicontinuous (for short, l.s.c.) at $x_0\in X$.
\end{lemma}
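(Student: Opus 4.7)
The plan is to verify the $\epsilon$--$\delta$ formulation of lower semicontinuity at $x_0$: given $\epsilon>0$, I need to exhibit $\delta>0$ such that
$$
   \excf{\Phi}{\Psi}(x_0)-\excf{\Phi}{\Psi}(x)\le\epsilon\quad\text{for every }x\in\ball{x_0}{\delta}.
$$
The natural way to estimate this difference is to pivot off both $\Phi(x)$ and $\Psi(x)$ and use the well-known subadditivity of the excess with respect to a three-term chain. Concretely, I would invoke the (easy) inequality
$$
   \exc{A}{C}\le\exc{A}{B}+\exc{B}{C}
$$
twice, once with $(A,B,C)=(\Phi(x_0),\Psi(x),\Psi(x_0))$ and once with $(A,B,C)=(\Phi(x_0),\Phi(x),\Psi(x))$, to obtain
$$
   \exc{\Phi(x_0)}{\Psi(x_0)}\le\exc{\Phi(x_0)}{\Phi(x)}+\exc{\Phi(x)}{\Psi(x)}+\exc{\Psi(x)}{\Psi(x_0)}.
$$

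Next I would control each of the two outer terms with one of the hypotheses. The middle term is precisely $\excf{\Phi}{\Psi}(x)$, so it moves to the left side in the rearranged form. Hypothesis (ii) yields $\exc{\Phi(x_0)}{\Phi(x)}\le l\,d(x_0,x)$, where $l\ge 0$ is the Lipschitz constant of $\Phi$. Hypothesis (i), formulated as the Hausdorff upper semicontinuity of $\Psi$ at $x_0$, provides for any prescribed tolerance $\eta>0$ a radius $\delta_1>0$ such that $\Psi(x)\subseteq\ball{\Psi(x_0)}{\eta}$, i.e.\ $\exc{\Psi(x)}{\Psi(x_0)}\le\eta$, whenever $x\in\ball{x_0}{\delta_1}$.

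The final step is simply to balance these two bounds against $\epsilon$: pick $\eta=\epsilon/2$, obtain the corresponding $\delta_1$, and set $\delta=\min\{\delta_1,\epsilon/(2l+1)\}$ (the ``$+1$'' absorbs the degenerate case $l=0$). Then for any $x\in\ball{x_0}{\delta}$ the displayed inequality rearranges into
$$
   \excf{\Phi}{\Psi}(x_0)\le l\,d(x_0,x)+\excf{\Phi}{\Psi}(x)+\eta\le\epsilon+\excf{\Phi}{\Psi}(x),
$$
which is the desired l.s.c.\ estimate. I do not anticipate any real obstacle: the argument is essentially a telescoping of the excess through the triangle-type inequality, with the Lipschitz hypothesis handling the $\Phi$-side error and the Hausdorff u.s.c.\ hypothesis handling the $\Psi$-side error. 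The only mild subtlety is making sure the triangle-type chaining is arranged so that the two controllable terms sit on the outside and the term we are estimating from below sits in the middle.
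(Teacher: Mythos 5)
Your proposal is correct and follows essentially the same route as the paper: both arguments bound $\exc{\Phi(x_0)}{\Psi(x_0)}$ by $\excf{\Phi}{\Psi}(x)$ plus a $\Phi$-side error controlled by the Lipschitz hypothesis and a $\Psi$-side error controlled by Hausdorff upper semicontinuity (the paper phrases this with sequences and cites its Remark~\ref{rem:Lipexc} where you invoke the excess triangle inequality directly, but the estimates are identical). No gaps.
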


\begin{proof}
Since $\excf{\Phi}{\Psi}$ acts on a metric space, it suffices to
show that for every sequence $(x_n)_{n\in\N}$ in $X$, with
$x_n\to x_0$ as $n\to\infty$, it results in 
\begin{eqnarray}   \label{in:lscexcf}
   \excf{\Phi}{\Psi}(x_0)\le\liminf_{n\to\infty}\excf{\Phi}{\Psi}(x_n).
\end{eqnarray}
Fix an arbitrary
$\epsilon>0$. By Hausdorff upper semicontinuity of $\Psi$ at $x_0$,
corresponding to $\epsilon$ there exists $\delta>0$ such that
$$
    \Psi(x)\subseteq\ball{\Psi(x_0)}{\epsilon},\quad\forall x\in
    \ball{x_0}{\delta}.
$$
As $x_n\to x_0$ as $n\to\infty$, there exists $n_\epsilon\in\N$
such that
$$
    \Psi(x_n)\subseteq\ball{\Psi(x_0)}{\epsilon},\quad\forall n\in\N,
    \hbox{ provided that } n\ge n_\epsilon.
$$
Therefore, one obtains that for any $y\in Y$ it holds
$$
   \dist{y}{\Psi(x_0)}\le\dist{y}{\Psi(x_n)}+\exc{\Psi(x_n)}{\Psi(x_0)}\le
   \dist{y}{\Psi(x_n)}+\epsilon
$$
for every $n\in\N$, with $n\ge n_\epsilon$. 
Now, by exploiting the Lipschitz continuity of the function $x
\mapsto \exc{\Phi(x)}{\Psi(x_0)}$ (recall Remark \ref{rem:Lipexc}),
for some $l\ge 0$ one obtains
\begin{eqnarray*}
  \excf{\Phi}{\Psi}(x_n) &=& \sup_{y\in\Phi(x_n)}\dist{y}{\Psi(x_n)}
  \ge\sup_{y\in\Phi(x_n)}\dist{y}{\Psi(x_0)}-\epsilon \\
   &\ge& \exc{\Phi(x_0)}{\Psi(x_0)}-ld(x_n,x_0)-\epsilon  \\
   &= &\excf{\Phi}{\Psi}(x_0)-ld(x_n,x_0)-\epsilon.
\end{eqnarray*}
It follows
$$
   \liminf_{n\to\infty}\excf{\Phi}{\Psi}(x_n)\ge\excf{\Phi}{\Psi}(x_0)
   -\epsilon.
$$
By arbitrariness of $\epsilon>0$, this shows the validity of
$(\ref{in:lscexcf})$,  thereby completing the proof.
\end{proof}


\section{Set-covering mappings}\label{Sect:2}

By a global covering behaviour of a multifunction $\Psi:X\rightrightarrows
Y$ acting between metric spaces the following property is
usually meant: there exists a constant $\alpha>0$ such that
\begin{equation}   \label{in:cov}
  \ball{\Psi(x)}{\alpha r}\subseteq \Psi(\ball{x}{r}),\quad
  \forall x\in X,\ \forall r>0
\end{equation}
(see, for instance,  \cite{Arut07,ArAvGeDmOb09,Dmit05}).

The main notion here under study comes up as a strong variant of
the above property, as stated below.

\begin{definition}     \label{def:setcov}
A set-valued mapping $\Psi:X\rightrightarrows Y$ between metric spaces
is said to be {\it set-covering} on $X$ with constant $\alpha$ if there
exists a positive real $\alpha$ such that
\begin{equation}   \label{in:defsetcov}
    \forall x\in X,\ \forall r>0\ 
   \exists u\in\ball{x}{r}\hbox{ such that }
   \ball{\Psi(x)}{\alpha r}\subseteq \Psi(u).
\end{equation}
\end{definition}

From Definition \ref{def:setcov} it is clear that, if a set-valued
mapping is set-covering with constant $\alpha$, then it is
covering in the sense of $(\ref{in:cov})$, with the same constant.
The converse is not true, as readily illustrated in the counterexamples
below. Other immediate consequences of Definition \ref{def:setcov}
are the facts that $\Psi$ is onto and that densely on $X$ it
takes values with nonempty interior.

\begin{example}      \label{ex:covnotsetcov}
Let $X=\R$ and $Y=\R^2$ be endowed with their usual (Euclidean)
metric structure. Consider the set-valued mapping $\Psi:\R\rightrightarrows
\R^2$ given by $\Psi(x)=|x|\Sph$. It is not difficult to see that $\Psi$
is covering on $\R$ with constant $\alpha=1$, whereas it fails to fulfil
Definition \ref{def:setcov} for any $\alpha>0$, as it is $\inte\Psi(x)
=\varnothing$ for every $x\in\R$.
Again, the mapping $\Psi:\R\rightrightarrows\R^2$ given by
$\Psi(x)=\ball{x}{1}$ is covering with constant $1$, but is not
set-covering, even if it takes images with nonempty interior.
This second mapping shows that, while Definition \ref{def:setcov}
forces a mapping to have images with nonempty interior in a
dense subset of $X$, this topological requirement is only
necessary.
\end{example}

\begin{example}     \label{ex:metdilat}
Let $\delta:X\longrightarrow [0,+\infty)$ be a function defined
on a metric space $(X,d)$ and satisfying the condition
\begin{eqnarray}      \label{in:deltacond}
     \inf_{x\in X} \inf_{r>0}\sup_{u\in\fron\ball{x}{r}}
   {\delta(u)-\delta(x)\over d(u,x)}=\alpha_\delta>0,
\end{eqnarray}
and let $(Y,d)$ be a metric space. For any fixed $y_0\in Y$,
the set-valued mapping $\Psi:X\rightrightarrows Y$, defined by
$$
     \Psi(x)=\ball{y_0}{\delta(x)},
$$
is set-covering on $X$ with any constant $\alpha\in
(0,\alpha_\delta)$. Indeed, fixed such an $\alpha$, let $x\in X$ and
$r>0$. Take $\epsilon>0$ in such a way that $\alpha+2\epsilon<
\alpha_\delta$. By condition $(\ref{in:deltacond})$, corresponding
to $\epsilon$, there exists $u\in\fron\ball{x}{r}$ such that
$\delta(u)\ge\delta(x)+(\alpha_\delta-\epsilon)r$. Thus, if $y\in
\ball{\Psi(x)}{\alpha r}$, that is $\dist{y}{\ball{y_0}{\delta(x)}}
\le\alpha r$, it results in
$$
    d(y,y_0)<\delta(x)+(\alpha+\epsilon)r<\delta(x)+
     (\alpha_\delta-\epsilon)r\le\delta(u).
$$
This means that $y\in\ball{y_0}{\delta(u)}=\Psi(u)$. Since
$u\in\ball{x}{r}$, the requirement $(\ref{in:defsetcov})$ is fulfilled.
Notice that, whenever $X$ is in particular a normed space, taking
$\delta(\cdot)=\|\cdot\|$, one finds $\alpha_{\|\cdot\|}=1$,
so condition $(\ref{in:deltacond})$ is valid.
\end{example}

Below some natural circumstances, in which the covering behaviour
formalized in Definition \ref{def:setcov} appear, are presented.

\begin{example}
(Solution mappings to systems of sublinear inequalities)
Let $X=\R^n$ be metrized with the norm $\|\cdot\|_\infty$ and let
$Y=\R^m$ be endowed with its usual (Euclidean) metric structure.
Suppose that $n$ functions $p_i:\R^m\longrightarrow\R$,
with $i=1,\dots,n$, are given, which are sublinear on $\R^m$,
i.e. such that
$$
    p_i(\nullv)=0,\quad p_i(t y)=tp_i(y),\quad\forall t>0,\, \forall y
   \in\R^m, \quad \hbox{ and $p_i$ convex on } \R^m.  
$$
Set
$$
    \|p_i\|_*=\max\{\|y^*\|:\ y^*\in\partial p_i(\nullv)\},\quad
    i=1,\dots,n,
$$
where $\partial p_i(\nullv)$ denotes the subdifferential of $p_i$ at
$\nullv$ in the sense of convex analysis, and
$$
    \|p\|_*=\max_{i=1,\dots,n}\|p_i\|_*.
$$
Notice that $\|p_i\|_*$ and $\|p\|_*$ are well defined and finite, as each $\partial
p_i(\nullv)$ is a nonempty compact subset of $\R^m$. In what follows, it is
assumed that $\|p\|_*>0$, as the case $\|p\|_*=0$ leads to $p_i
\equiv 0$ for every $i=1,\dots,n$, which is of minor interest here.
Consider the solution mapping $\Psi:\R^n\rightrightarrows\R^m$
associated with a parameterized inequality system involving
functions $p_i$ as follows
$$
    \Psi(x)=\{y\in\R^m:\ p_i(y)\le|x_i|,\quad\forall i=1,\dots,n\}.
$$
$\Psi$ clearly takes nonempty closed and convex values. Let us show that
$\Psi$ is set-covering, with constant $\alpha=1/\|p\|_*$.
To do so, fixed $x\in\R^n$ and $r>0$, take $y\in\ball{\Psi(x)}
{r/\|p\|_*}$. This implies the existence of $v\in\Psi(x)$ such that
$\|y-v\|\le r/\|p\|_*$. Thus, it must be
$$
    p_i(v)\le |x_i|,\quad\forall i=1,\dots,n.
$$
By the sublinearity of each $p_i$, one has
\begin{eqnarray*}
   p_i(y)\le p_i(y-v)+p_i(v)\le\|p\|_*\|y-v\|+|x_i|\le r+|x_i|,
   \ \forall i=1,\dots,n.
\end{eqnarray*}
Consequently, by defining $u\in\R^n$ as
\begin{eqnarray*}
   u_i=\left\{ \begin{array}{ll}
             r+x_i, &\quad\hbox{ if } x_i\ge 0, \\
           -r+x_i,   &\quad\hbox{ if } x_i<0,
   \end{array} \right.
\end{eqnarray*}
it results in
$$
    p_i(y)\le |u_i|,\quad\forall i=1,\dots,n,
$$
and hence
$$
    \balll{\Psi(x)}{{r\over\|p\|_*}}\subseteq\Psi(u),
$$
with $\|u-x\|_\infty\le r$. If $\R^n$ is remetrized through
another (equivalent) norm, $\Psi$ still remains set-covering, but with
a different constant.
\end{example}

\begin{example}(Epigraphical mappings in partially ordered
normed spaces) Let $(\X,\|\cdot\|)$ and $(\Y,\|\cdot\|)$ be real
normed spaces. Suppose that on $\Y$ a partial order relation
$\por{\Y}$ is induced by a closed, convex, pointed cone $\Y_+
\subseteq\Y$, in the sense that $y_1\por{\Y} y_2$ iff $y_2-y_1
\in\Y_+$.
Let us introduce the following assumption on the interplay
bewteen the partial order and the metric structure on $\Y$:
\begin{eqnarray}    \label{ass:ordermetric}
   \qquad\exists\gamma\in [1,+\infty):\ 
    \forall r>0\ \exists\check{y}\in\Y:\ \check{y}\por{\Y}y,\ \forall
    y\in r\B \ \hbox{ and }\ \|\check{y}\|=\gamma r.
\end{eqnarray}
Of course, because of the linearity of the partial order,
assumption $(\ref{ass:ordermetric})$ applies also to balls
with center at each point of $\Y$.
Such an assumption is verified, for instance, if $\Y=\R^m$, $\Y_+=
\R^m_+$, and $\|\cdot\|_p$ is a $p$-norm, with $p\in [1,+\infty)$,
by the constant $\gamma=m^{1/p}$. Otherwise, if $\Y=\R^m$ is normed by
$\|\cdot\|_\infty$, one has $\gamma=1$. If $\Y=C([0,1])$, $\Y_+=
\{x\in C([0,1]):\ x(t)\ge 0,\quad\forall t\in [0,1]\}$ and the metric
structure on $\Y$ is induced by the norm $\|\cdot\|_\infty$, then again
assumption $(\ref{ass:ordermetric})$ is true with $\gamma=1$.
Instead, if the space $(\ell_p(\N),\|\cdot\|_p)$, with $p\in [1,+\infty)$ is partially
ordered by the componentwise order relation, assumption
$(\ref{ass:ordermetric})$ fails to be verified. If $(\ell_\infty(\N),
\|\cdot\|_\infty)$ is partially ordered by the same order relation,
the above assumption is verified with $\gamma=1$.

Now, let $f:\X\longrightarrow\Y$ be a mapping covering on $\X$
with constant $\alpha>0$, i.e. such that
\begin{eqnarray}    \label{in:covsinglemap}
    f(\ball{x}{r})\supseteq\ball{f(x)}{\alpha r},\quad\forall
    x\in\X,\ \forall r>0.
\end{eqnarray}
It is possible to show that its epigraphical set-valued mapping
${\rm epi}_f:\X\rightrightarrows\Y$, which is defined as
$$
    {\rm epi}_f(x)=f(x)+\Y_+
$$
or, equivalently, by the condition
$$
    \grph{{\rm epi}_f}=\epi{f},
$$
where $\epi{f}=\{(x,y)\in\X\times\Y:\ f(x)\por{\Y}y\}$,
is set-covering in the sense of Definition \ref{def:setcov}, with
constant $\alpha/\gamma$. To see this, fix $x\in\X$ and $r>0$,
and consider an element $\check{y}\in\Y$ as in $(\ref{ass:ordermetric})$,
namely such that
$$
    \check{y}\por{\Y}y,\quad\forall y\in\ball{f(x)}{\alpha r}
    \quad\hbox{ and }\quad \|\check{y}-f(x)\|=\gamma\alpha r.
$$
This implies that
$$
    \ball{f(x)}{\alpha r}\subseteq\check{y}+\Y_+.
$$
Since it is $\check{y}\in\ball{f(x)}{\alpha\gamma r}$, then by
virtue of $(\ref{in:covsinglemap})$ there must exist $u\in
\ball{x}{\gamma r}$ such that $\check{y}=f(u)$. Thus, letting
$\rho=\gamma r$, one obtains
\begin{eqnarray*}
   \balll{{\rm epi}_f(x)}{{\alpha\over\gamma}\rho} &=&
   \ball{f(x)+\Y_+}{\alpha r}\subseteq\ball{f(x)}{\alpha r}+\Y_+ \\
   &\subseteq &\check{y}+\Y_+=f(u)+\Y_+={\rm epi}_f(u),
\end{eqnarray*}
with $u\in\ball{x}{\rho}$. Whenever $(\X,\|\cdot\|)$ and $(\Y,
\|\cdot\|)$ are, in particular, Banach spaces and $f:\X\longrightarrow
\Y$ is a bounded linear operator, then according to the Banach-Schauder
open mapping theorem $f$ is known to be
covering on $\X$ iff it is onto. In such an event, the quantity
$$
    \|f^{-1}\|_-=\sup_{y\in\B}\inf\{\|x\|:\ f(x)=y\}=\sup_{y\in\B}
   \dist{\nullv}{f^{-1}(y)},
$$
where $f^{-1}:\Y\rightrightarrows\X$ is the (generally) set-valued
inverse mapping of $f$,
is a positive element of $\R$ and, as a covering constant, one
can take $\alpha=1/\|f^{-1}\|_-$. Thus the epigraphical mapping
of a bounded linear operator, which is onto, is set-covering with
constant $1/\gamma\|f^{-1}\|_-$.
\end{example}

To the aim of providing further examples of classes
of set-covering mappings,  let us focus now on convex
processes. The idea of a convex process is due to R.T. Rockafellar
(see \cite{Rock70}) and emerges when dealing with derivatives
of set-valued mappings or with certain constraint systems
arising in optimization problems.
After him, a set-valued mapping $\Theta:\X\rightrightarrows
\Y$ between normed spaces is said to be a convex process
if $\grph{\Theta}$ is a convex cone of $\X\times\Y$ with apex
at the null vector, or, equivalently, iff $\Theta$ satisfies all
the following three requirements:
\begin{enumerate}

\item[(i)] $\nullv\in\Theta(\nullv)$;

\item[(ii)] $\Theta(\lambda x)=\lambda\Theta(x),\quad\forall\lambda>0,
\ \forall x\in\X$;

\item[(iii)] $\Theta(x_1)+\Theta(x_2)\subseteq\Theta(x_1+x_2),\quad
\forall x_1,\, x_2\in\X$.

\end{enumerate}
Clearly $\Theta$ is a convex process iff $\Theta^{-1}$ is so.
Further, a convex process is said to be closed provided that so
is its graph. A way to approach the study of the covering behaviour
of convex processes is through the notion of openness at
$\nullv$. According to \cite{Robi72}, a convex process $\Theta$ is
said to be {\it open} at $\nullv$ if there exists $\alpha>0$
such that
\begin{eqnarray}      \label{in:defopen}
    \Theta(\inte\B)\supseteq\inte\alpha\B.
\end{eqnarray}
Such a condition has been characterized in terms of finiteness of
the inner norm of the inverse mapping. More precisely, defined
the inner norm of a mapping $\Theta:\X\rightrightarrows\Y$ as
$$
    \|\Theta\|_-=\sup_{x\in\dom\Theta\cap\B}\inf
    \{\|y\|:\ y\in\Theta(x)\},
$$
it has been established that $\Theta$ is open at $\nullv$
iff $\|\Theta^{-1}\|_-<+\infty$ and, as a constant appearing
in $(\ref{in:defopen})$, it is possible to take any value
$\alpha\in (0,1/\|\Theta^{-1}\|_-^{-1})$ (see \cite{Robi72}).
Whenever $(\X,\|\cdot\|)$ and $(\Y,\|\cdot\|)$ are, in
particular, Banach spaces, a sufficient condition for a closed,
convex process $\Theta:\X\rightrightarrows\Y$ to be open
at $\nullv$ is that $\Theta$ is onto (open mapping theorem
for convex processes, see \cite{AubFra90,DonRoc14,Robi72}).

The next result and the subsequent related remark show that
a proper subclass of onto and closed convex processes can
be found, whose elements are set-covering mappings.

\begin{proposition}      \label{pro:condsetcovcp}
Let $\Theta:\X\rightrightarrows\Y$ be a closed, convex process
between Banach spaces. If the following condition holds
\begin{eqnarray}     \label{in:condsetcovcp}
    \exists\alpha>0,\quad \exists u\in\B\quad\hbox{such that}\quad
    \Theta(u)\supseteq\inte\alpha\B,
\end{eqnarray}
then $\Theta$ is set-covering with any constant $\tilde\alpha\in
(0,\alpha)$. Vice versa, if $\Theta$ is set-covering with a constant
$\alpha>0$, then condition $(\ref{in:condsetcovcp})$ holds.
\end{proposition}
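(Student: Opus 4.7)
The proof naturally splits along the two implications, and the convex process structure provides exactly the algebraic machinery to push local data at $\nullv$ to arbitrary basepoints $x$.

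For the forward direction, I suppose that $\Theta(u)\supseteq\inte\alpha\B$ for some $u\in\B$ and fix $\tilde\alpha\in(0,\alpha)$. Given arbitrary $x\in\X$ and $r>0$, the natural candidate for the element promised by Definition \ref{def:setcov} is $v=x+ru$, which immediately lies in $\ball{x}{r}$ since $\|v-x\|=r\|u\|\le r$. To verify the inclusion $\ball{\Theta(x)}{\tilde\alpha r}\subseteq \Theta(v)$, I would pick any $y'$ with $\dist{y'}{\Theta(x)}\le\tilde\alpha r$, choose a slack parameter $\epsilon\in(0,\alpha-\tilde\alpha)$, and produce some $y_0\in\Theta(x)$ with $\|y'-y_0\|\le(\tilde\alpha+\epsilon)r<\alpha r$. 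Then $y'-y_0$ lies in the open ball $r\cdot\inte\alpha\B=\inte(\alpha r\B)$, and properties (ii) and (iii) of a convex process give
$$
 y'=y_0+(y'-y_0)\in\Theta(x)+r\Theta(u)=\Theta(x)+\Theta(ru)\subseteq\Theta(x+ru)=\Theta(v),
$$
as required.

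For the converse, I would just specialize Definition \ref{def:setcov} at the base point $x=\nullv$ and radius $r=1$: it yields some $u\in\B$ with $\ball{\Theta(\nullv)}{\alpha}\subseteq\Theta(u)$. Property (i) of the convex process gives $\nullv\in\Theta(\nullv)$, whence $\alpha\B=\ball{\nullv}{\alpha}\subseteq\ball{\Theta(\nullv)}{\alpha}\subseteq\Theta(u)$, and in particular $\inte\alpha\B\subseteq\Theta(u)$, which is precisely condition (\ref{in:condsetcovcp}).

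The only subtle point is the mismatch between the \emph{open} ball appearing in hypothesis (\ref{in:condsetcovcp}) and the \emph{closed} enlargement in the definition of set-covering; this is what forces one to sacrifice a strict amount $\alpha-\tilde\alpha$ of the covering constant, and is handled by the $\epsilon$-argument above. It is worth noting that neither the closedness of the graph of $\Theta$ nor the Banach space structure is actually invoked in the argument, so the hypothesis is phrased as in the statement mainly to match the standard framework of the open mapping theorem for convex processes recalled just before the proposition.
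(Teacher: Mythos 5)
Your proof is correct and follows essentially the same route as the paper's: in the forward direction you decompose $y'=y_0+(y'-y_0)$ with $y_0\in\Theta(x)$ and absorb the remainder into $\Theta(ru)=r\Theta(u)\supseteq\inte\alpha r\B$ via positive homogeneity and superadditivity, using the same $\epsilon$-slack to pass from the closed enlargement to the open ball, and the converse is the identical specialization at $x=\nullv$, $r=1$. Your closing observation that closedness and completeness are not actually used also matches the paper's argument, which likewise never invokes them.
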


\begin{proof}
Assume first that condition $(\ref{in:condsetcovcp})$ holds
true. Fix arbitrary $x\in\X$, $r>0$ and $\tilde\alpha\in (0,\alpha)$,
take $y\in\ball{\Theta(x)}{\tilde\alpha r}$ and pick $\epsilon>0$
in such a way that $\tilde\alpha(1+\epsilon)<\alpha$. This implies
that there exists of $v\in\Theta(x)$ such that
$$
    \|y-v\|<\tilde\alpha(1+\epsilon)r,
$$
that is $y-v\in\inte\tilde\alpha(1+\epsilon)r\B$. Since, as a convex
process, $\Theta$ is positively homogeneous,
condition $(\ref{in:condsetcovcp})$ entails the existence of
$u\in r\B$ such that
$$
   \Theta(u)\supseteq\inte\alpha r\B\supseteq\inte
  \tilde\alpha(1+\epsilon)r\B.
$$
Consequently, one has
$$
   y=(y-v)+v\in\inte\alpha r\B+\Theta(x)\subseteq\Theta(u)+\Theta(x)
   \subseteq\Theta(u+x),
$$
where $u+x\in\ball{x}{r}$. It should be noticed that, according
to the inclusion $(\ref{in:condsetcovcp})$, the element $u$ in
$r\B$ does not depend neither on $y$ nor on $v$.

Vice versa, if choosing $x=\nullv$ and $r=1$ in Definition
\ref{def:setcov}, since it is $\nullv\in\Theta(\nullv)$, one finds
$$
   \inte\alpha\B\subseteq\ball{\Theta(\nullv)}{\alpha}\subseteq
   \Theta(u)
$$
for some $u\in\B$ and hence condition $(\ref{in:condsetcovcp})$
is verified at once.
\end{proof}

\begin{remark}
Condition $(\ref{in:condsetcovcp})$ is a quantitative requirement
about the surjective behaviour of $\Theta$ that can be expressed
in merely topological terms as
\begin{eqnarray}   \label{ne:intnonempty}
    \inte\Theta(\nullv)\ne\varnothing.
\end{eqnarray}
Indeed, such a condition obviously implies $(\ref{in:condsetcovcp})$.
Vice versa, if $u$ and $\alpha$ are as in $(\ref{in:condsetcovcp})$,
one has
$$
   \Theta(\nullv)=\Theta(u)+\Theta(-u)\supseteq\inte\alpha\B+
   \Theta(-u),
$$
wherefrom the interior noneptiness condition in
$(\ref{ne:intnonempty})$ follows. It is worth noting that condition
$(\ref{in:condsetcovcp})$ is essentially stronger than openness at
$\nullv$. In other words, the latter is
sufficient for a closed, convex process to be covering, whereas
it does not in the case of the set-covering property, even in the
case of convex processes.
This occurence is illustrated in the next example.
\end{remark}

\begin{example}
Consider the Banach space $\X=\Y=(\ell_p(\N),\|\cdot\|_p)$,
with $p\in[1,+\infty)$ and define $\Theta:\ell_p(\N)\rightrightarrows
\ell_p(\N)$ as
$$
    \Theta(x)=x+\X_+,
$$
where $\X_+=\{x=(\xi_n)_{n\in\N}:\ \xi_n\ge 0,\quad\forall n\in\N\}$.
It is not difficult to check that $\Theta$ is a closed, convex
process and it is clear that $\Theta$ is also onto. Therefore
$\Theta$ is open at $\nullv$ and hence covering.
Nevertheless, in the light of Proposition \ref{pro:condsetcovcp}
$\Theta$ fails to be set-covering. Indeed, it is $\Theta(\nullv)=\X_+$
and such a cone is well known to have empty topological
interior (see, for instance, \cite{Jahn86}).
\end{example}

In the rest of this section, the context is again that of
set-valued mappings between metric spaces.
Given a set-valued mapping $\Psi:X\rightrightarrows Y$
and a nonempty set $S\subseteq Y$, there are two natural
notions of inverse image of $S$ through $\Psi$, which are
known in set-valued analysis as upper (or strong) 
and lower (or weak) inverse, respectively. Here yet another
notion is considered, which is defined as follows
$$
     \Inv{\Psi}(S)=\{x\in X:\ S\subseteq\Psi(x)\}.
$$
Letting $S$ to vary in $2^Y$, one obtains from $\Psi$
a set-valued mapping $\Inv{\Psi}:2^Y\rightrightarrows X$.
Such a mapping can be viewed as solution mapping of a set-inclusion,
where the set $S$ plays the role of a parameter. As one expects,
a reformulation of the set-covering property can be
expressed in terms of error bound for the solution mapping
associated with such a parameterized set-inclusion problem.

\begin{proposition}     \label{pro:setmrchar}
Let $\Psi:X\rightrightarrows Y$ be a  set-valued mapping between
metric spaces.
\begin{enumerate}

\item[(i)] If $\Psi$ is set-covering with constant $\alpha>0$, then
\begin{eqnarray}    \label{in:setmrInv}
    \dist{x}{\Inv{\Psi}(S)}\le{1\over\alpha}\exc{S}{\Psi(x)},\quad
    \forall x\in X,\ \forall S\in 2^Y.
\end{eqnarray}

\item[(ii)] If inequality $(\ref{in:setmrInv})$ holds, then $\Psi$
is set-covering with any constant $\tilde\alpha\in (0,\alpha)$. 
\end{enumerate}
\end{proposition}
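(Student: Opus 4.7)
The argument rests on translating the set-covering inequality $\ball{\Psi(x)}{\alpha r}\subseteq\Psi(u)$ into the statement $u\in\Inv{\Psi}(S)$ for a suitably chosen $S$, and vice versa. Both implications amount to a direct bookkeeping between radii and excesses, so no deep tool beyond Definition \ref{def:setcov} will be needed.

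For assertion (i), I would fix $x\in X$ and $S\in 2^Y$. If $\exc{S}{\Psi(x)}=+\infty$, the inequality is vacuous, so assume $e=\exc{S}{\Psi(x)}<+\infty$. By definition of the excess, $\dist{s}{\Psi(x)}\le e$ for every $s\in S$, i.e.\ $S\subseteq\ball{\Psi(x)}{e}$. Pick any $r>e/\alpha$, so that $\alpha r>e$ and hence $S\subseteq\ball{\Psi(x)}{e}\subseteq\ball{\Psi(x)}{\alpha r}$. Applying set-covering of $\Psi$ at $x$ with radius $r$, one obtains $u\in\ball{x}{r}$ with $\Psi(u)\supseteq\ball{\Psi(x)}{\alpha r}\supseteq S$, which means $u\in\Inv{\Psi}(S)$. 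Hence $\dist{x}{\Inv{\Psi}(S)}\le d(u,x)\le r$, and letting $r\downarrow e/\alpha$ yields $(\ref{in:setmrInv})$.

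For assertion (ii), fix $\tilde\alpha\in(0,\alpha)$, $x\in X$ and $r>0$. The natural choice is to take $S=\ball{\Psi(x)}{\tilde\alpha r}$, so that every $s\in S$ satisfies $\dist{s}{\Psi(x)}\le\tilde\alpha r$, whence $\exc{S}{\Psi(x)}\le\tilde\alpha r$. Plugging this into the assumed error bound gives
$$
\dist{x}{\Inv{\Psi}(S)}\le\frac{1}{\alpha}\exc{S}{\Psi(x)}\le\frac{\tilde\alpha}{\alpha}r<r.
$$
In particular $\Inv{\Psi}(S)\ne\varnothing$ (recall the convention $\dist{x}{\varnothing}=+\infty$), and by definition of distance there exists $u\in\Inv{\Psi}(S)$ with $d(u,x)<r$, hence $u\in\ball{x}{r}$. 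The membership $u\in\Inv{\Psi}(S)$ is precisely $\ball{\Psi(x)}{\tilde\alpha r}=S\subseteq\Psi(u)$, which is $(\ref{in:defsetcov})$ with constant $\tilde\alpha$.

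The only delicate point is the strict-versus-nonstrict inequality in (ii): the loss of a factor, passing from $\alpha$ to $\tilde\alpha<\alpha$, is exactly what guarantees $\dist{x}{\Inv{\Psi}(S)}<r$, producing an actual witness $u$ in the closed ball $\ball{x}{r}$. This is also why the converse cannot in general be stated with the same constant $\alpha$, and is the main (mild) obstacle in the argument.
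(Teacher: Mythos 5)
Your proof is correct and follows essentially the same route as the paper: part (i) reduces the error bound to an application of Definition \ref{def:setcov} at radius (roughly) $\exc{S}{\Psi(x)}/\alpha$, and part (ii) tests $(\ref{in:setmrInv})$ on $S=\ball{\Psi(x)}{\tilde\alpha r}$, with the strict inequality $\tilde\alpha/\alpha<1$ producing the witness $u$ exactly as you explain. The only cosmetic difference is that in (i) you take $r>e/\alpha$ and pass to the limit, whereas the paper applies the definition directly at $r=e/\alpha$ (treating $e=0$ separately via closedness of the values); both are fine.
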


\begin{proof}
(i) Assume that $x\in X$ and $S\subseteq Y$ are arbitrary.
If  $\exc{S}{\Psi(x)}=+\infty$, then $(\ref{in:setmrInv})$ trivially
holds. In this case, it may happen that $\Inv{\Psi}(S)=\varnothing$. 
 Thus, let us pass to consider the case $r=\exc{S}{\Psi(x)}
<+\infty$. Since $\Psi$ takes closed values, if $r=0$
one has $S\subseteq\Psi(x)$ and hence $x\in\Inv{\Psi}(S)$. If $r>0$,
since $S\subseteq\ball{\Psi(x)}{r}$, according to Definition \ref{def:setcov},
there exists $u\in\ball{x}{r/\alpha}$ such that $S\subseteq\Psi(u)$.
It follows
$$
   \dist{x}{\Inv{\Psi}(S)}\le d(x,u)\le{1\over\alpha}\exc{S}{\Psi(x)}.
$$
(ii) Fix any $\tilde\alpha\in (0,\alpha)$. To see that in this case
Definition \ref{def:setcov} is satisfied, it suffices to take
$S=\ball{\Psi(x)}{\tilde\alpha r}$ and to observe that, with this
choice, it is $\exc{S}{\Psi(x)}\le\tilde\alpha r<+\infty$.
Therefore, from inequality $(\ref{in:setmrInv})$ one gets
$$
    \dist{x}{\Inv{\Psi}(S)}\le {1\over\alpha}\tilde\alpha r<r,
$$
which means that there exists $u\in\ball{x}{r}$ such that
$\Psi(u)\supseteq \ball{\Psi(x)}{\tilde\alpha r}$.
\end{proof}

\begin{remark}
It is proper to warn the reader that, in general, it may happen thatù
$\dom\Inv{\Psi}\ne 2^Y$.
From inequality $(\ref{in:setmrInv})$ it follows that, whenever
$\Psi$ is set-covering and $S\subseteq Y$ is such that $\exc{S}{\Psi(x)}
<+\infty$, then it must be ${\Inv{\Psi}(S)}\ne\varnothing$. In particular,
$\dom\Inv{\Psi}$ includes all bounded subsetes of $Y$.
Let us denote by ${\mathcal B}(Y)$ the collection of all such
subsets of $Y$.
\end{remark}

The next step consists in linking the set-covering property
with the Lipschitzian behaviour
of the mapping $\Inv{\Psi}$. To this aim, the set ${\mathcal B}(Y)$
is equipped with the Hausdorff distance.

\begin{proposition}
Let $\Psi:X\rightrightarrows Y$ be a  set-valued mapping between
metric spaces. If $\Psi$ is set-covering with constant $\alpha>0$, then
$\Inv{\Psi}:{\mathcal B}(Y)\rightrightarrows X$ is Lipschitz with
constant $1/\alpha$. Vice versa, given $\Psi:X\longrightarrow
{\mathcal B}(Y)$, if $\Inv{\Psi}:{\mathcal B}(Y)\rightrightarrows X$
is Lipschitz with constant $0<l<+\infty$, then $\Psi$ is set-covering
on $X$ with any constant $\alpha\in (0,1/l)$.
\end{proposition}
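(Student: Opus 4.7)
The plan is to bootstrap both implications from Proposition \ref{pro:setmrchar}, which already packages set-covering as a one-sided error bound $\dist{x}{\Inv{\Psi}(S)}\le\alpha^{-1}\exc{S}{\Psi(x)}$. All that remains is to convert this pointwise estimate into the set-to-set Lipschitz formulation on $({\mathcal B}(Y),\Haus{\cdot}{\cdot})$ and, for the converse, to choose test sets that encode the defining inequality $(\ref{in:defsetcov})$ of set-covering.

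For the direct implication, I fix $S_1,S_2\in{\mathcal B}(Y)$ and any $x\in\Inv{\Psi}(S_1)$, so that $S_1\subseteq\Psi(x)$. Applying Proposition \ref{pro:setmrchar}(i) with $S_2$ in place of $S$ gives
$$
\dist{x}{\Inv{\Psi}(S_2)}\le\frac{1}{\alpha}\exc{S_2}{\Psi(x)}.
$$
The inclusion $S_1\subseteq\Psi(x)$ forces $\dist{y}{\Psi(x)}\le\dist{y}{S_1}$ for every $y\in S_2$, hence $\exc{S_2}{\Psi(x)}\le\exc{S_2}{S_1}\le\Haus{S_1}{S_2}$. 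Taking the supremum over $x\in\Inv{\Psi}(S_1)$ produces the desired Lipschitz estimate $\exc{\Inv{\Psi}(S_1)}{\Inv{\Psi}(S_2)}\le\alpha^{-1}\Haus{S_1}{S_2}$.

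For the converse, given $x\in X$, $r>0$, and $\alpha\in(0,1/l)$, I would test the Lipschitz hypothesis on the pair $S_1=\Psi(x)$ and $S_2=\ball{\Psi(x)}{\alpha r}$, which both belong to ${\mathcal B}(Y)$ since $\Psi(x)\in{\mathcal B}(Y)$. One has $S_1\subseteq S_2$ and $\exc{S_2}{S_1}\le\alpha r$ by definition of enlargement, so $\Haus{S_1}{S_2}\le\alpha r$; moreover $x\in\Inv{\Psi}(S_1)$, so
$$
\dist{x}{\Inv{\Psi}(S_2)}\le l\Haus{S_1}{S_2}\le l\alpha r<r.
$$
The desired $u$ is then any element of $\Inv{\Psi}(S_2)$ within distance $r$ of $x$, obtained as a sufficiently close approximation of the infimum above.

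The argument is essentially bookkeeping; the only delicate point is the strict inequality $l\alpha r<r$, which is needed so that an element $u\in\Inv{\Psi}(S_2)$ can actually be picked inside the closed ball $\ball{x}{r}$, and which explains why the second part must require $\alpha\in(0,1/l)$ rather than $\alpha=1/l$. A minor secondary point is that the single-valuedness assumption $\Psi:X\longrightarrow{\mathcal B}(Y)$ guarantees $S_2\in{\mathcal B}(Y)\subseteq\dom\Inv{\Psi}$, so that the Lipschitz inequality may indeed be invoked at the pair $(S_1,S_2)$.
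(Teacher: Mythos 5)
Your proposal is correct and follows essentially the same route as the paper: the forward direction converts the error bound of Proposition \ref{pro:setmrchar}(i) into the excess estimate $\exc{\Inv{\Psi}(S_1)}{\Inv{\Psi}(S_2)}\le\alpha^{-1}\exc{S_2}{S_1}$ by observing $\exc{S_2}{\Psi(x)}\le\exc{S_2}{S_1}$ when $S_1\subseteq\Psi(x)$, and the converse tests the Lipschitz hypothesis on the pair $\Psi(x)$, $\ball{\Psi(x)}{\alpha r}$ exactly as the paper does. Your remark on why the strict inequality $l\alpha r<r$ is what permits extracting $u\in\ball{x}{r}$ (and hence why $\alpha=1/l$ is excluded) is a correct reading of the same point implicit in the paper's argument.
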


\begin{proof}
Consider a pair of elements $A,\, B\in{\mathcal B}(Y)$. From
assertion (i) in Proposition \ref{pro:setmrchar}, one has
$$
    \dist{x}{\Inv{\Psi}(B)}\le \alpha^{-1}\exc{B}{\Psi(x)},
    \quad\forall x\in X.
$$
Thus, for all those $x\in\Inv{\Psi}(A)$, i.e. such that $A\subseteq
\Psi(x)$, one finds
$$
    \dist{x}{\Inv{\Psi}(B)}\le \alpha^{-1}\exc{B}{A},
$$
whence
$$
   \exc{\Inv{\Psi}(A)}{\Inv{\Psi}(B)}=\sup_{x\in\Inv{\Psi}(A)}
   \dist{x}{\Inv{\Psi}(B)}\le\alpha^{-1}\exc{B}{A}.
$$
To achieve the inequality
$$
   \Haus{\Inv{\Psi}(A)}{\Inv{\Psi}(B)}\le\alpha^{-1}\Haus{A}{B},
$$
it suffices to interchange the role of $A$ and $B$.

To prove the second assertion in the thesis, let $x\in X$ and $r>0$
be arbitrary. Since now $\Psi(x)\in\mathcal{B}(Y)$, the same is true
for $\ball{\Psi(x)}{\alpha r}$. By the Lipschitz continuity of $\Inv{\Psi}$
with constant $l$, if taking any $\alpha\in (0,1/l)$ one finds

\begin{eqnarray*}
  \Haus{\Inv{\Psi}(\Psi(x))}{\Inv{\Psi}(\ball{\Psi(x)}{\alpha r})}&\le& l
  \Haus{\Psi(x)}{\ball{\Psi(x)}{\alpha r}}  \\
    &= & l\exc{\ball{\Psi(x)}{\alpha r}}{\Psi(x)}<r.
\end{eqnarray*}
On the other hand, observe that it is
\begin{eqnarray*}
   \Inv{\Psi}(\Psi(x)) &=& \{u\in X:\ \Psi(x)\subseteq\Psi(u)\}\supseteq
   \Inv{\Psi}(\ball{\Psi(x)}{\alpha r}) \\
   &=& \{u\in X:\ \ball{\Psi(x)}{\alpha r}\subseteq\Phi(u)\}.
\end{eqnarray*}
Thus, the last inequality amounts to say that
$$
    \exc{\Inv{\Psi}(\Psi(x))}{\Inv{\Psi}(\ball{\Psi(x)}{\alpha r})}<r,
$$
wherefrom, as in particular it is $x\in\Inv{\Psi}(\Psi(x))$, one
obtains
$$
   \dist{x}{\{u\in X:\ \ball{\Psi(x)}{\alpha r}\subseteq\Phi(u)\}}
   <r.
$$
The last inequality implies the existence of $u\in\ball{x}{r}$
with the property that $\ball{\Psi(x)}{\alpha r}\subseteq\Psi(u)$, thereby
showing that $\Psi$ is set-covering with constant $\alpha$.
\end{proof}

The rest of the present section is devoted to illustrate some
robustness features of the set-covering property in the
presence of various perturbations.

\begin{proposition}
Let $\Psi:X\rightrightarrows Y$ be set-covering on $X$ with constant
$\alpha$ and let $g:Y\longrightarrow Z$ be covering on $Y$ with
constant $\beta$. Then, their composition $g\circ\Psi:X\rightrightarrows
Z$ is set-covering with any constant $\gamma\in (0,\alpha\beta)$.
\end{proposition}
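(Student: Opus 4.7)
The goal is to verify, for every $x\in X$ and $r>0$, the existence of a single point $u\in\ball{x}{r}$ such that $\ball{g(\Psi(x))}{\gamma r}\subseteq g(\Psi(u))$. The plan is to produce $u$ directly from the set-covering of $\Psi$ applied to $(x,r)$, and then to check that every element of the enlargement $\ball{g(\Psi(x))}{\gamma r}$ can be pulled back through $g$, by the covering property of $g$, to an element lying inside $\ball{\Psi(x)}{\alpha r}$.

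More concretely, I would fix $x\in X$, $r>0$, and $\gamma\in(0,\alpha\beta)$, and apply Definition \ref{def:setcov} to $\Psi$ with radius $r$ to obtain $u\in\ball{x}{r}$ with $\ball{\Psi(x)}{\alpha r}\subseteq\Psi(u)$. I claim this $u$ already witnesses the set-covering of $g\circ\Psi$ with constant $\gamma$. To check this, pick an arbitrary $z\in\ball{g(\Psi(x))}{\gamma r}$, so that $\dist{z}{g(\Psi(x))}\le\gamma r$. Since $\gamma<\alpha\beta$, I can choose a slack $\epsilon>0$ with $\gamma r+\epsilon<\alpha\beta r$, and then find $y'\in\Psi(x)$ such that $d(z,g(y'))<\gamma r+\epsilon$.

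At this point the covering property of $g$ on $Y$ enters: writing $(\gamma r+\epsilon)/\beta=t$, the inclusion $\ball{g(y')}{\beta t}\subseteq g(\ball{y'}{t})$ yields some $y\in\ball{y'}{(\gamma r+\epsilon)/\beta}$ with $g(y)=z$. Since $y'\in\Psi(x)$, this gives $y\in\ball{\Psi(x)}{(\gamma r+\epsilon)/\beta}\subseteq\ball{\Psi(x)}{\alpha r}\subseteq\Psi(u)$, where the middle inclusion uses precisely $(\gamma r+\epsilon)/\beta<\alpha r$. Hence $z=g(y)\in g(\Psi(u))=(g\circ\Psi)(u)$, which is what was required.

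The only delicate point is that the element $u$ must be chosen \emph{uniformly} for all $z\in\ball{g(\Psi(x))}{\gamma r}$; this is exactly what the set-covering property of $\Psi$ provides, since the inclusion $\ball{\Psi(x)}{\alpha r}\subseteq\Psi(u)$ holds for the whole enlargement at once. The strict inequality $\gamma<\alpha\beta$ is what creates room to absorb the $\epsilon$-slack that arises because $g(\Psi(x))$ need not be closed and the infimum in the definition of $\dist{z}{g(\Psi(x))}$ need not be attained; this is where I expect the only real bookkeeping to lie, and the choice $\epsilon=(\alpha\beta-\gamma)r/2$ suffices.
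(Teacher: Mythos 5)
Your proof is correct and follows essentially the same route as the paper: the witness $u$ comes once and for all from the set-covering of $\Psi$, each point of $\ball{(g\circ\Psi)(x)}{\gamma r}$ is pulled back through the covering of $g$ into $\ball{\Psi(x)}{\alpha r}\subseteq\Psi(u)$, and the strict inequality $\gamma<\alpha\beta$ absorbs the non-attainment of the infimum. The paper phrases this via set inclusions and a union over $y\in\Psi(x)$ rather than an explicit $\epsilon$-slack, but the argument is the same.
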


\begin{proof}
By the set-covering property of $\Psi$, corresponding to $x\in X$
and $r>0$, there exists $u\in\ball{x}{r}$ such that $\ball{\Psi(x)}{\alpha
r}\subseteq\Psi(u)$, whence it follows
\begin{eqnarray}    \label{in:gsetcovPsi}
   g(\ball{\Psi(x)}{\alpha r})\subseteq (g\circ\Psi)(u).
\end{eqnarray}
By the covering property on $Y$ of $g$, one has
\begin{eqnarray}    \label{in:BggB}
    \ball{g(y)}{\alpha\beta r}\subseteq g(\ball{y}{\alpha r}),
    \quad\forall y\in Y,\ \forall r>0.
\end{eqnarray}
Notice that, since for any $z\in Z$ it is
$$
    \dist{z}{(g\circ\Psi)(x)}=\inf_{y\in\Psi(x)}d(z,g(y)),
$$
it holds

$$
  \ball{(g\circ\Psi)(x)}{\gamma r}\subseteq\bigcup_{y\in\Psi(x)}
  \ball{g(y)}{\alpha\beta r}.
$$
Thus, in the light of inclusion $(\ref{in:BggB})$, one obtains
\begin{eqnarray*}
    \ball{(g\circ\Psi)(x)}{\gamma r}&\subseteq &
   \bigcup_{y\in\Psi(x)}g(\ball{y}{\alpha r})\subseteq
   g\left(\bigcup_{y\in\Psi(x)}\ball{y}{\alpha r}\right)    \\
   &\subseteq &  g(\ball{\Psi(x)}{\alpha r}).
\end{eqnarray*}
By recalling inclusion $(\ref{in:gsetcovPsi})$, one deduces that
$$
   \ball{(g\circ\Psi)(x)}{\gamma r}\subseteq(g\circ\Psi)(u).
$$
This completes the proof.
\end{proof}

The next proposition relates to a stability phenomenon regarding
set-covering, which can be observed to take place in the presence
of additive perturbations by single-valued Lipschitz mappings.
In doing so, along with the previous one, it provides as well
a tool for building further examples of classes of set-covering
mappings.

\begin{proposition}     \label{pro:staLippert}
Let $X$ be a metric space and let $Y$ be a vector space,
equipped with a shift invariant metric.
Let $\Psi:X\rightrightarrows Y$ and $g:X\longrightarrow Y$
be a set-valued and a single-valued mapping, respectively.
Suppose that $\Psi$ is set-covering on $X$ with constant
$\alpha>0$, whereas $g$ is Lipschitz on $X$ with constant
$\beta\in [0,\alpha)$. Then, the mapping $\Psi+g$ is set-covering
on $X$ with constant $\alpha-\beta$.
\end{proposition}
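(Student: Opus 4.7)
The plan is to exploit the uniform choice of $u$ in the set-covering property of $\Psi$: the crucial feature of Definition \ref{def:setcov} is that a single $u$ works for all points of the enlarged image, so the same $u$ should work for $\Psi+g$ after a translation argument.

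Concretely, I would proceed as follows. Fix $x\in X$ and $r>0$. Apply the set-covering property of $\Psi$ with constant $\alpha$ at $(x,r)$ to produce $u\in\ball{x}{r}$ such that
$$
   \ball{\Psi(x)}{\alpha r}\subseteq\Psi(u).
$$
I claim this same $u$ witnesses the set-covering property of $\Psi+g$ with constant $\alpha-\beta$. To verify this, pick an arbitrary $y\in\ball{(\Psi+g)(x)}{(\alpha-\beta)r}$; the goal is to show that $y\in\Psi(u)+g(u)$, equivalently that $y-g(u)\in\Psi(u)$.

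The key computation is the distance estimate
$$
   \dist{y-g(u)}{\Psi(x)}\le\dist{y-g(x)}{\Psi(x)}+d(y-g(u),y-g(x)).
$$
By shift invariance of the metric on $Y$ the second term equals $d(g(x),g(u))$, which by the Lipschitz hypothesis on $g$ and the inclusion $u\in\ball{x}{r}$ is bounded by $\beta r$. For the first term, shift invariance again gives $\dist{y-g(x)}{\Psi(x)}=\dist{y}{\Psi(x)+g(x)}=\dist{y}{(\Psi+g)(x)}\le(\alpha-\beta)r$. Adding these two bounds yields
$$
   \dist{y-g(u)}{\Psi(x)}\le(\alpha-\beta)r+\beta r=\alpha r,
$$
so $y-g(u)\in\ball{\Psi(x)}{\alpha r}\subseteq\Psi(u)$ by the choice of $u$, and consequently $y\in\Psi(u)+g(u)=(\Psi+g)(u)$.

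The argument is essentially bookkeeping, and I do not expect a real obstacle beyond correctly applying shift invariance twice (once to rewrite $\dist{y-g(x)}{\Psi(x)}$ as a distance involving $(\Psi+g)(x)$, and once to convert $d(y-g(u),y-g(x))$ into $d(g(x),g(u))$). The conceptual point worth emphasising in the write-up is that the $u$ extracted from the set-covering of $\Psi$ is independent of $y$, which is precisely what lets the perturbed mapping inherit the property in its strong form.
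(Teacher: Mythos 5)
Your proof is correct and follows essentially the same route as the paper: the same two applications of shift invariance, the same $\beta r+(\alpha-\beta)r=\alpha r$ estimate, and the same reliance on the single $u$ covering all of $\ball{\Psi(x)}{\alpha r}$. The only (harmless, arguably cleaner) difference is that you extract $u$ before choosing $y$, whereas the paper picks $y$ first and then notes that $u$ does not depend on it.
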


\begin{proof}
Fixed $x\in X$ and $r>0$, according to Definition \ref{def:setcov}
one has to show that there exists $u\in\ball{x}{r}$ such that
\begin{eqnarray}      \label{in:Psiplusgthesis}
    \ball{\Psi(x)+g(x)}{(\alpha-\beta)r}\subseteq\Psi(u)+g(u).
\end{eqnarray}
Take an arbitrary $y\in\ball{\Psi(x)+g(x)}{(\alpha-\beta)r}$. This
means that it is $\dist{y}{\Psi(x)+g(x)}\le(\alpha-\beta)r$ or, by
virtue of the shift invariance property of the metric on $Y$,
\begin{eqnarray}      \label{in:shiftgPsidist}
    \dist{y-g(x)}{\Psi(x)}\le(\alpha-\beta)r.
\end{eqnarray}
Since $g$ is Lipschitz on $X$ with constant $\beta$, one
has
$$
    d(g(z),g(x))\le\beta r,\quad\forall z\in\ball{x}{r}.
$$
Consequently, from inequality $(\ref{in:shiftgPsidist})$ one
obtains again by shift invariance
\begin{eqnarray*}
  \dist{y-g(z)}{\Psi(x)}&\le& d(y-g(z),y-g(x))+\dist{y-g(x)}{\Psi(x)} \\
  &\le &\beta r+(\alpha-\beta)r=\alpha r, \quad\forall z\in\ball{x}{r}.
\end{eqnarray*}
In other terms, it holds
$$
    y-g(z)\in\ball{\Psi(x)}{\alpha r},\quad\forall z\in\ball{x}{r}.
$$
By using the fact that $\Psi$ is set-covering on $X$ with constant
$\alpha>0$, one can state that there exists $u\in\ball{x}{r}$, such
that
$$
    y-g(z)\in\Psi(u),\quad\forall z\in\ball{x}{r}.
$$
The reader should notice that such an element as $u$ does not
depend neither on $z$ nor on $y$, because $\Psi(u)$ covers the
whole set $\ball{\Psi(x)}{\alpha r}$.
In particular, one has
$$
    y-g(u)\in\Psi(u),
$$
which implies that $y\in\Psi(u)+g(u)$. By arbitrariness of $y$
in $\ball{\Psi(x)+g(x)}{(\alpha-\beta)r}$, this proves the
inclusion in $(\ref{in:Psiplusgthesis})$, thereby completing
the proof.
\end{proof}

\begin{remark}
(i) The phenomenon described by Proposition \ref{pro:staLippert}
can be inserted in the framework of the stability analysis for
covering behaviours started with the well-known Milyutin theorem
(see, for instance, \cite{Arut07,ArAvGeDmOb09,Dmit05,DmMiOs80}).
Note that, in contrast to the latter, which refers to a traditional
covering behaviour, no completeness assumption is needed in the
case of set-covering.

(ii) As a comment to the assumptions of Proposition \ref{pro:staLippert},
it is to be pointed out that the shift invariance requirement on the metric
of $Y$ is not really restrictive. Indeed, according to a result due to S.
Kakutani, any linear metric space can be equivalently remetrized by
a shift invariant distance (see \cite{Role87}, Theorem 2.2.11).
\end{remark}

\begin{example}
Let $(X,d)$ be a metric space and let $(Y,d)$ be a vector space
endowed with a shift-invariant metric. Then, as a consequence of
Proposition \ref{pro:staLippert} and Example \ref{ex:metdilat},
if $\delta:X\longrightarrow
[0,+\infty)$ fulfils condition $(\ref{in:deltacond})$ and $g:X
\longrightarrow Y$ is a Lipschitz mapping with constant $\beta
<\alpha_\delta$, then the set-valued mapping $\Psi:X\rightrightarrows
Y$ given by
$$
    \Psi(x)=\ball{g(x)}{\delta(x)}=\ball{\nullv}{\delta(x)}+g(x)
$$
turns out to be set-covering with any constant $\alpha\in (0,
\alpha_\delta-\beta)$.
\end{example}


\section{Set-inclusion points of pairs of mappings and applications}     \label{Sect:3}

\subsection{Set-inclusion points}

The next definition introduces a very general problem, which can be
posed whenever any pair of multivalued mappings is given.

\begin{definition}
Given two set-valued mappings $\Psi:X\rightrightarrows Y$
and $\Phi:X\rightrightarrows Y$, an element $x\in X$ is called a
{\it set-inclusion point} of the (ordered) pair $(\Phi,\Psi)$ if
$$
    \Phi(x)\subseteq\Psi(x).
$$
Denote
$$
    \Inc{\Phi}{\Psi}=\{x\in X:\ \Phi(x)\subseteq\Psi(x)\}.
$$
\end{definition}

A set-inclusion point problem is an abstract formalism able to subsume
in its extreme generality
several specific problems, having or not a variational nature.
For instance, it enables one to embed equilibrium conditions, fixed
or coincidence point problems, generalized equations, by proper
choices of $\Phi$ and $\Psi$. Nonetheless, it is when $\Phi$ (and hence
$\Psi$) is actually a multivalued mapping that its peculiarity appears.
Besides, it is interesting to note that $\Inc{\Phi}{\Psi}$ can be
regarded as the set of all fixed points of the mapping
$\Inv{\Psi}\circ\Phi:X\rightrightarrows X$.

\begin{remark}    \label{rem:Incclosed}
It is worth noting that, as a straightforward consequence of Lemma
\ref{lem:lscexc}, the set $\Inc{\Phi}{\Psi}$ is closed whenever $\Psi$ is
Hausdorff u.s.c. on $X$ and $\Phi$ is Lipschitz on $X$. Indeed, if $\Inc{\Phi}{\Psi}
\ne\varnothing$ and $(x_n)_{n\in\N}$ is a sequence in $\Inc{\Phi}{\Psi}$
converging to $x_0$ as $n\to\infty$, one finds
$$
   0=\liminf_{n\to\infty}\excf{\Phi}{\Psi}(x_n)\ge\excf{\Phi}{\Psi}(x_0)
   \ge 0, 
$$
wherefrom, by closedness of $\Psi(x_0)$, it follows that
$x_0\in\Inc{\Phi}{\Psi}$.
\end{remark}

In what follows, pursuing a similar line of reasearch as in
\cite{Arut07,ArAvGeDmOb09},
the question of the solution existence of set-inclusion
points is analyzed in the general setting of multifunctions
between metric spaces. In particular,
the next result provides a sufficient condition for a set-inclusion
problem, involving a set-covering and a Lipschitz mappings,
to admit a solution, as well as an error bound for its
solution set.

\begin{theorem}      \label{thm:incpoint}
Let $\Psi:X\rightrightarrows Y$ and $\Phi:X\longrightarrow {\mathcal B}
(Y)$ be given set-valued mappings between metric spaces. Suppose
that:

\begin{enumerate}

\item[(i)] $(X,d)$ is metrically complete;

\item[(ii)] $\Psi$ is Hausdorff u.s.c. and set-covering on $X$,
with constant $\alpha>0$;

\item[(iii)] $\Phi$ is Lipschitz on $X$ with constant $\beta
\in [0,\alpha)$.

\end{enumerate}

\noindent Then, $\Inc{\Phi}{\Psi}\ne\varnothing$ and the following
estimate holds
\begin{eqnarray}      \label{in:Incdist}
   \dist{x}{\Inc{\Phi}{\Psi}}\le {\exc{\Phi(x)}{\Psi(x)}\over
   \alpha-\beta},\quad\forall x\in\ X.
\end{eqnarray}
\end{theorem}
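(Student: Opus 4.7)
The plan is to reduce the theorem to an application of Ekeland's variational principle to the ``gap'' function $\phi:X\to[0,+\infty)$ defined by $\phi(x)=\excf{\Phi}{\Psi}(x)=\exc{\Phi(x)}{\Psi(x)}$. Since $\Phi(x)\in\mathcal{B}(Y)$ and $\Psi(x)$ is nonempty and closed, $\phi$ is everywhere finite and nonnegative, and by Lemma \ref{lem:lscexc} it is lower semicontinuous on $X$. Moreover, closedness of $\Psi(x)$ yields $\phi(x)=0$ iff $\Phi(x)\subseteq\Psi(x)$, so $\Inc{\Phi}{\Psi}=\phi^{-1}(0)$, and it suffices to produce, from any prescribed $x_0\in X$, a zero of $\phi$ within distance $\phi(x_0)/(\alpha-\beta)$ of $x_0$.

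Fix $x_0\in X$ with $\phi(x_0)>0$ (otherwise take $\bar x=x_0$). I would then apply Ekeland's principle on the complete space $(X,d)$ with parameters $\epsilon=\phi(x_0)$ and $\lambda=\phi(x_0)/(\alpha-\beta)$ — the hypothesis $\phi(x_0)\le\inf\phi+\epsilon$ holds trivially, since $\inf\phi\ge 0$ — obtaining $\bar x\in X$ with $d(\bar x,x_0)\le\phi(x_0)/(\alpha-\beta)$, $\phi(\bar x)\le\phi(x_0)$, and the strict ``slope'' inequality $\phi(x)+(\alpha-\beta)d(x,\bar x)>\phi(\bar x)$ for every $x\ne\bar x$. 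The core step is to show $\phi(\bar x)=0$. If not, set $r=\phi(\bar x)/\alpha>0$; the inclusion $\Phi(\bar x)\subseteq\ball{\Psi(\bar x)}{\alpha r}$ holds by definition of the excess, so the set-covering hypothesis on $\Psi$ yields $u\in\ball{\bar x}{r}$ with $\ball{\Psi(\bar x)}{\alpha r}\subseteq\Psi(u)$; in particular $\Phi(\bar x)\subseteq\Psi(u)$.

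Combining this last inclusion with the Lipschitz property of $\Phi$ gives, via the triangle-type inequality for the excess,
\[
\phi(u)=\exc{\Phi(u)}{\Psi(u)}\le\exc{\Phi(u)}{\Phi(\bar x)}+\exc{\Phi(\bar x)}{\Psi(u)}\le\beta\, d(u,\bar x).
\]
If $u\ne\bar x$, substituting into the Ekeland slope inequality forces $\alpha\, d(u,\bar x)>\phi(\bar x)=\alpha r$, contradicting $u\in\ball{\bar x}{r}$; if instead $u=\bar x$, set-covering itself yields $\Phi(\bar x)\subseteq\ball{\Psi(\bar x)}{\alpha r}\subseteq\Psi(\bar x)$, forcing $\phi(\bar x)=0$, again a contradiction. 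Hence $\bar x\in\Inc{\Phi}{\Psi}$, proving nonemptiness, and $d(\bar x,x_0)\le\phi(x_0)/(\alpha-\beta)$ gives $(\ref{in:Incdist})$ by arbitrariness of $x_0$. The main delicacy I anticipate is the calibration of the Ekeland parameters so that the descent coefficient $\epsilon/\lambda$ equals $\alpha-\beta$ exactly: this is precisely the tuning that lets the Lipschitz loss $\beta$ of $\Phi$ be absorbed by the covering gain $\alpha$ of $\Psi$ in the final contradiction, and the borderline case $u=\bar x$ (where the Ekeland inequality is vacuous) has to be handled separately from the direct set-covering inclusion.
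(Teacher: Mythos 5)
Your proposal is correct and follows essentially the same route as the paper: apply Ekeland's variational principle to the excess function $\excf{\Phi}{\Psi}$ (l.s.c.\ by Lemma \ref{lem:lscexc}) with the parameters tuned so that the slope constant is $\alpha-\beta$, then derive a contradiction at a nonzero value by combining the set-covering of $\Psi$ with the Lipschitz estimate $\excf{\Phi}{\Psi}(u)\le\beta\,d(u,\bar x)$. The only cosmetic difference is your explicit separate treatment of the case $u=\bar x$, which the paper dispatches with the same observation in one line.
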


\begin{proof}
The proof is based on a variational technique. Notice indeed
that, in order to prove the existence of
a set-inclusion point $\bar x\in X$ for the pair $\Psi$
and $\Phi$, it suffices to show
that the function $\excf{\Phi}{\Psi}:X\longrightarrow
[0,+\infty)$ attains the value $0$ at some point $\bar x$. This, because
the validity of $\dist{y}{\Psi(\bar x)}=0$ for every $y\in
\Phi(\bar x)$, as $\Psi(\bar x)$ is a closed set, implies
$\Phi(\bar x)\subseteq\Psi(\bar x)$.
The nonemptiness of the values taken by $\Psi$, along with the
boundedness of the values of $\Phi$, make the function $\excf{\Phi}{\Psi}$
real valued all over $X$.
So, take an arbitrary $x_0\in X$. In the case
$\excf{\Phi}{\Psi}(x_0)=0$, one has immediately $x_0\in\Inc{\Phi}{\Psi}
\ne\varnothing$ and the estimate in $(\ref{in:Incdist})$.
So, assume henceforth that $\excf{\Phi}{\Psi}(x_0)>0$.
Observe that, under the current hypotheses, the function $\excf{\Phi}{\Psi}$
turns out to be l.s.c. on $X$, by virtue of Lemma \ref{lem:lscexc}.
As it is obviously bounded from below and $X$ is complete,
then the Ekeland variational principle
applies. Accordingly, for every $\lambda>0$ there exists $x_\lambda
\in X$ such that
\begin{eqnarray*}
    \excf{\Phi}{\Psi}(x_\lambda)\le\excf{\Phi}{\Psi}(x_0),
\end{eqnarray*}
\begin{eqnarray}     \label{in:EVP2}
     d(x_\lambda,x_0)\le\lambda,
\end{eqnarray}
\begin{eqnarray}     \label{in:EVP3}
    \hskip1cm \excf{\Phi}{\Psi}(x_\lambda)<\excf{\Phi}{\Psi}(x)+
    {\excf{\Phi}{\Psi}(x_0)\over\lambda}d(x,x_\lambda),
    \quad\forall x\in X\backslash\{x_\lambda\}.
\end{eqnarray}
Take $\lambda=\excf{\Phi}{\Psi}(x_0)/(\alpha-\beta)$. The claim
to be proved is that $\excf{\Phi}{\Psi}(x_\lambda)=0$. Ab absurdo,
assume that $r_\lambda=\excf{\Phi}{\Psi}(x_\lambda)>0$. Since
$\Phi(x_\lambda)\subseteq\ball{\Psi(x_\lambda)}{r_\lambda}$, the set-covering
property of $\Psi$ enables one to state the existence of
$u\in\ball{x_\lambda}{r_\lambda/\alpha}$ such that
$$
    \Psi(u)\supseteq\ball{\Psi(x_\lambda)}{r_\lambda}\supseteq
    \Phi(x_\lambda).
$$
Therefore, recalling that the function $x\mapsto\exc{\Phi(x)}{\Psi(u)}$
is Lipschitz on $X$, with constant $\beta$, as a consequence
of the assumption on $\Phi$ and Remark \ref{rem:Lipexc},
one obtains
\begin{eqnarray}    \label{in:estexcatu}
   \excf{\Phi}{\Psi}(u)\le\exc{\Phi(x_\lambda)}{\Psi(u)}+\beta d(u,x_\lambda)
   =\beta d(u,x_\lambda).
\end{eqnarray}
Notice that it must be $u\in X\backslash\{x_\lambda\}$, otherwise
it would be $\excf{\Phi}{\Psi}(x_\lambda)=0$. By choosing $x=u$
in inequality $(\ref{in:EVP3})$ and taking into account inequality
$(\ref{in:estexcatu})$, one finds
$$
   \excf{\Phi}{\Psi}(x_\lambda)<\excf{\Phi}{\Psi}(u)+(\alpha-\beta)
   d(u,x_\lambda)\le\alpha d(u,x_\lambda)\le r_\lambda,
$$
which leads to an evident contradiction. This allows one to conclude
that  $\excf{\Phi}{\Psi}(x_\lambda)=0$, and hence $x_\lambda\in
\Inc{\Phi}{\Psi}\ne\varnothing$. From inequality $(\ref{in:EVP2})$,
it readily follows
\begin{eqnarray*}
   \dist{x_0}{\Inc{\Phi}{\Psi}}\le d(x_0,x_\lambda)
   \le{\exc{\Phi(x_0)}{\Psi(x_0)}\over\alpha-\beta}.
\end{eqnarray*}
By arbitrariness of $x_0$, this completes the proof.
\end{proof}

When $\Phi$ is a single-valued mapping, Theorem \ref{thm:incpoint}
allows one to obtain, as a special case, a well-known result
about the existence and error bound estimates for coincidence
points of the inclusion $\Phi(x)\in\Psi(x)$. Nevertheless, in order
to achieve such a result, the conventional notion of covering
is actually enough (see \cite{Arut07}). In contrast to this, as far as set-inclusion
points are concerned, the set-covering property
plays an essential role. The following counterexample
shows that such a property can not be replaced with
the usual covering notion for set-valued mappings.

\begin{example}
Consider the set-valued mapping $\Psi:\R\rightrightarrows\R^2$,
introduced in Example \ref{ex:covnotsetcov}, that is covering
with constant $\alpha=1$.
It is easy to check that this mapping is also Hausdorff u.s.c. on $\R$.
Define a further mapping $\Phi:\R\rightrightarrows\R^2$ as follows
$$
    \Phi(x)=\left({|x|\over 2}+1\right)\B.
$$
Since it is
$$
    \exc{\Phi(x_1)}{\Phi(x_2)}\le{1\over 2}|x_1-x_2|,\quad
    \forall x_1,\, x_2\in\R,
$$
$\Phi$ turns out to be Lipschitz on $\R$ with constant $\beta
=1/2<1=\alpha$ and bounded value. Nonetheless, in this case it happens that
$\Inc{\Phi}{\Psi}=\varnothing$, as one readily observes,
being $\inte\Psi(x)=\varnothing$ for every $x\in\R$.
\end{example}

A related application of the notion of set-covering concerns the
fixed point theory for multivalued mappings.

\begin{definition}
An element $x$ of a metric space $X$ is said to be a {\it strongly
fixed point} of a set-valued mapping $\Psi:X\rightrightarrows X$
if for some $r>0$ it is
$$
    \ball{x}{r}\subseteq\Psi(x).
$$
The set of all strongly fixed point of $\Psi$ is denoted henceforth
by $\SFix{\Psi}$.
\end{definition}

In the following proposition, strongly fixed points are shown to arise
in connection with set-covering mappings with constant greater
than 1 (a sort of expanding mappings).

\begin{proposition}
Let $\Psi:X\rightrightarrows X$ be a set-valued mapping defined on
a vector space, endowed with a complete and shift invariant metric.
If $\Psi$ is u.s.c. and set-covering on $X$,
with constant $\alpha>1$, then $\SFix{\Psi}\ne\varnothing$ and it holds
$$
    \dist{x}{\SFix{\Psi}}\le{\dist{x}{\Psi(x)}\over\alpha-1},
    \quad\forall x\in X.
$$
Moreover, $\SFix{\Psi}$ is  a dense subset of the set of all fixed points
of $\Psi$.
\end{proposition}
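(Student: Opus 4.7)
The plan is to apply Theorem \ref{thm:incpoint} with the single-valued mapping $\Phi(x)=\{x\}$, thereby reducing the set-inclusion problem to the ordinary fixed point inclusion $x\in\Psi(x)$, and then to upgrade ordinary fixed points to strongly fixed points by a direct use of the set-covering property itself. First I would check that $\Phi$ fits the hypotheses of Theorem \ref{thm:incpoint}: since $\exc{\{x_1\}}{\{x_2\}}=d(x_1,x_2)$, the mapping $\Phi$ is Lipschitz on $X$ with constant $\beta=1$, and it clearly takes bounded (singleton) values. Because $\alpha>1$, the gap $\alpha-\beta=\alpha-1$ is positive, and the remaining assumptions—completeness of $X$ and the Hausdorff upper semicontinuity plus set-covering of $\Psi$—are part of the hypothesis.

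Identifying $\Inc{\Phi}{\Psi}$ with the ordinary fixed point set $F=\{x\in X:\ x\in\Psi(x)\}$, Theorem \ref{thm:incpoint} would then yield $F\ne\varnothing$ together with the error bound
$$
   \dist{x}{F}\le\frac{\dist{x}{\Psi(x)}}{\alpha-1},\quad\forall x\in X.
$$
Next I would show that every element of $F$ is arbitrarily well approximated by elements of $\SFix{\Psi}$. Fix $\bar x\in F$ and $r>0$. By Definition \ref{def:setcov}, there exists $u\in\ball{\bar x}{r}$ such that $\ball{\Psi(\bar x)}{\alpha r}\subseteq\Psi(u)$. Since $\bar x\in\Psi(\bar x)$, this gives $\ball{\bar x}{\alpha r}\subseteq\Psi(u)$, and the triangle inequality together with $d(u,\bar x)\le r$ produces
$$
   \ball{u}{(\alpha-1)r}\subseteq\ball{\bar x}{\alpha r}\subseteq\Psi(u),
$$
so that $u\in\SFix{\Psi}$ with $d(u,\bar x)\le r$. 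Letting $r\to 0^+$ shows that $\SFix{\Psi}$ is dense in $F$; in particular $\SFix{\Psi}\ne\varnothing$, and $\dist{x}{\SFix{\Psi}}\le\dist{x}{F}$ combined with the preceding bound gives the claimed estimate.

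The only substantive step is the density argument, and its success hinges entirely on the strict inequality $\alpha>1$, which is precisely what guarantees $(\alpha-1)r>0$ and so promotes $u$ to a genuine strongly fixed point. The rest amounts to a packaged application of Theorem \ref{thm:incpoint} and standard triangle-inequality manipulations; shift invariance of the metric is invoked only implicitly, through the hypotheses it makes available in that theorem's framework.
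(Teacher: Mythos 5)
Your proof is correct, but it reaches the conclusion by a genuinely different decomposition than the paper. The paper applies Theorem \ref{thm:incpoint} to the family $\Phi_r(x)=\ball{x}{r}$, for which shift invariance of the metric is needed to verify the Lipschitz hypothesis $\exc{\Phi_r(x_1)}{\Phi_r(x_2)}\le d(x_1,x_2)$; the members of $\Inc{\Phi_r}{\Psi}$ are then by definition strongly fixed points of radius $r$, so $\SFix{\Psi}=\bigcup_{r>0}\Inc{\Phi_r}{\Psi}\ne\varnothing$ at once, the error bound follows by taking the infimum over $r>0$ of $\exc{\ball{x}{r}}{\Psi(x)}/(\alpha-1)\le (r+\dist{x}{\Psi(x)})/(\alpha-1)$ (shift invariance again), and density in the fixed point set falls out of the error bound because a fixed point has $\dist{x}{\Psi(x)}=0$. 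You instead take $\Phi(x)=\{x\}$, obtain the ordinary fixed point set $F$ with its error bound, and then upgrade fixed points to strongly fixed points by a direct one-line use of Definition \ref{def:setcov}; the final estimate follows since density of $\SFix{\Psi}$ in $F$ gives $\dist{x}{\SFix{\Psi}}\le\dist{x}{F}$. Your route is in fact more economical on hypotheses: the singleton map is $1$-Lipschitz in any metric space, so---contrary to your closing remark that shift invariance is ``invoked implicitly''---your argument never uses the vector space structure or the shift invariance at all and proves the statement over an arbitrary complete metric space. The paper's route buys a one-line density argument at the end, but pays for it with the translation-invariance bookkeeping up front.
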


\begin{proof}
Fix arbitrary $x_0\in X$ and $r>0$ and consider the set-valued mapping
$\Phi_r:X\rightrightarrows X$ given by
$$
    \Phi_r(x)=\ball{x}{r}.
$$
Let us show that, under the current hypotheses,
$\Phi_r$ is Lipschitz with constant $\beta=1$. Notice indeed
that, by the shift invariance of the metric on $X$, one has $\ball{x}{r}=
x+\ball{\nullv}{r}$. Thus, taken $x_1,\, x_2\in X$, if $y_1\in\Phi_r(x_1)=
x_1+\ball{\nullv}{r}$, for some $u\in\ball{\nullv}{r}$ it results in
\begin{eqnarray*}
    \dist{y_1}{\Phi_r(x_2)} &=&  \dist{y_1}{x_2+\ball{\nullv}{r}}=
       \dist{y_1-x_2}{\ball{\nullv}{r}}  \\
    &=&  \dist{x_1+u-x_2}{\ball{\nullv}{r}}   \\
    &\le & d(x_1,x_2)+\dist{u}{\ball{\nullv}{r}}=d(x_1,x_2),
\end{eqnarray*}
whence
$$
    \exc{\Phi_r(x_1)}{\Phi_r(x_2)}\le d(x_1,x_2).
$$
Since $\alpha>1$, then it is possible to apply Theorem \ref{thm:incpoint},
according to which $\Inc{\Phi_r}{\Psi}\ne\varnothing$ and
$$
   \dist{x_0}{\Inc{\Phi_r}{\Psi}}\le {\exc{\Phi_r(x_0)}{\Psi(x_0)}
   \over\alpha-1}.
$$
Now, observe that $\SFix{\Psi}=\cup_{r>0}{\Inc{\Phi_r}{\Psi}}\ne
\varnothing$. By using again the shift invariance of the metric,
one obtains
\begin{eqnarray*}
   \dist{x_0}{\SFix{\Psi}} &\le& \inf_{r>0}{\exc{\Phi_r(x_0)}{\Psi(x_0)}
   \over\alpha-1} \\
   &=&\inf_{r>0}\sup_{u\in\ball{\nullv}{r}}{\dist{x_0+u}{\Psi(x_0)}\over\alpha-1} \\
   &\le & \inf_{r>0}\sup_{u\in\ball{\nullv}{r}}{d(x_0+u,x_0)+
   \dist{x_0}{\Psi(x_0)}\over\alpha-1}   \\
   &=& {\dist{x_0}{\Psi(x_0)}\over\alpha-1}.
\end{eqnarray*}
This proves the first assertion in thesis. The second one is a
straightforward consequence of the first.
\end{proof}

\subsection{Applications to exact penalization}

Of course, in force of its versatility,
a set-inclusion problem associated with a given pair of multivalued
mappings may appear among the constraints of optimization problems.
In the remaining part of this section, it is shown how the error
bound estimate provided in Theorem \ref{thm:incpoint} can be
exploited in deriving exact penalization results specific for problems
with a constraint of this type.
Exact penalization is a well-known approach for the treatment of
variously constrained optimization problems, whose effectiveness
is recognized from the theoretical as well as from the algorithmic
viewpoint. Essentially, it consists in reducing a given constrained
extremum problem to an unconstrained one, by replacing its objective
functional with a so-called penalty functional, which is obtained by
adding to the original objective functional a term properly quantifying
the constraint violation (see, for instance, \cite{Zasl10}).
Let us focus here on constrained optimization problems of the
form
$$
   \min\varphi(x) \quad\hbox{ subject to }\quad x\in
   {\mathcal R}=\Inc{\Phi}{\Psi},    \leqno({\mathcal P})
$$
where the objective functional $\varphi:X\longrightarrow\R\cup
\{\pm\infty\}$ and the multivalued mappings $\Psi:X
\rightrightarrows Y$ and  $\Phi:X\longrightarrow{\mathcal B}(Y)$
are given problem data. The penalty functional
$$
    \varphi_l(x)=\varphi(x)+l\cdot\excf{\Phi}{\Psi}(x)
$$
enables one to associate with problem $({\mathcal P})$ the unconstrained
problem
$$
   \min_{x\in X}\varphi_l(x).    \leqno({\mathcal P}_l)
$$
Conditions ensuring that a local solution to $({\mathcal P})$ is
also a local solution to $({\mathcal P}_l)$, provided that $l$ is
large enough, namely ensuring the existence of an exact penalty
functional,
turns out to be useful for formulating necessary
optimality conditions for problem $({\mathcal P})$. 
In the next result, a condition of this type is established.

\begin{theorem}
Let $\bar x\in {\mathcal R}$ be a local solution to $({\mathcal P})$.
Suppose that

\begin{enumerate}

\item[(i)] $\varphi$ is locally Lipschitz near $\bar x$, with constant
$l_\varphi>0$;

\item[(ii)] $(X,d)$ is metrically complete;

\item[(iii)] $\Psi$ is Hausdorff u.s.c. and set-covering on $X$,
with constant $\alpha>0$;

\item[(iv)] $\Phi$ is Lipschitz on $X$ with constant $\beta
\in [0,\alpha)$.

\end{enumerate}

\noindent Then, the penalty functional $\varphi_l$ is exact at $\bar x$ (i.e.
$\bar x$ is an unconstrained local minimizer
of $\varphi_l$), for every $l\ge{l_\varphi\over\alpha-\beta}$.

\end{theorem}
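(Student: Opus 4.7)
The plan is to exploit the error bound (\ref{in:Incdist}) from Theorem \ref{thm:incpoint}, which quantitatively relates $\dist{x}{\mathcal{R}}$ to the constraint-violation functional $\excf{\Phi}{\Psi}(x)$, together with the local Lipschitzianity of $\varphi$. Hypotheses (ii)--(iv) are precisely those needed to invoke Theorem \ref{thm:incpoint}, so the estimate is available; hypothesis (i) then converts proximity to feasibility into proximity of objective values.

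Concretely, I would first fix $\delta_0>0$ small enough that $\varphi$ is Lipschitz on $\ball{\bar x}{\delta_0}$ with constant $l_\varphi$ and simultaneously $\varphi(\bar x)\le\varphi(x)$ for every $x\in\ball{\bar x}{\delta_0}\cap\mathcal{R}$, and then choose $\delta\in(0,\delta_0/2)$. Given $x\in\ball{\bar x}{\delta}$, the fact that $\bar x\in\mathcal{R}$ gives $\excf{\Phi}{\Psi}(\bar x)=0$ and hence $\varphi_l(\bar x)=\varphi(\bar x)$, so the target inequality reduces to $\varphi(\bar x)\le\varphi_l(x)$. For each $n\in\N$ I would pick $x_n\in\mathcal{R}$ realising
$$d(x,x_n)\le\dist{x}{\mathcal{R}}+\tfrac{1}{n}\le\frac{\excf{\Phi}{\Psi}(x)}{\alpha-\beta}+\tfrac{1}{n},$$
the last bound being (\ref{in:Incdist}). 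Since $\dist{x}{\mathcal{R}}\le d(x,\bar x)\le\delta$, the triangle inequality yields $d(x_n,\bar x)\le 2\delta+1/n\le\delta_0$ for $n$ large, placing each such $x_n$ inside the neighborhood on which both the Lipschitz property of $\varphi$ and the local minimality of $\bar x$ over $\mathcal{R}$ apply. Thus $\varphi(\bar x)\le\varphi(x_n)\le\varphi(x)+l_\varphi d(x,x_n)$, and letting $n\to\infty$ produces
$$\varphi(\bar x)\le\varphi(x)+\frac{l_\varphi}{\alpha-\beta}\excf{\Phi}{\Psi}(x)\le\varphi(x)+l\cdot\excf{\Phi}{\Psi}(x)=\varphi_l(x)$$
for every $l\ge l_\varphi/(\alpha-\beta)$, as required.

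The main obstacle is purely a matter of neighborhood bookkeeping: the feasible approximants $x_n$ produced via the error bound must be forced into a neighborhood on which Lipschitzianity of $\varphi$ and the local minimality of $\bar x$ over $\mathcal{R}$ both hold. This is handled by the triangle estimate $d(x_n,\bar x)\le d(x_n,x)+d(x,\bar x)$, which forces $\delta$ to be taken strictly smaller than $\delta_0/2$. No fresh variational machinery is required here, since the substantive work — namely, Ekeland's variational principle applied to the lower semicontinuous function $\excf{\Phi}{\Psi}$ — has already been absorbed into Theorem \ref{thm:incpoint}.
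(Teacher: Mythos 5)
Your proof is correct and follows the same strategy as the paper: invoke the error bound of Theorem \ref{thm:incpoint} (available under hypotheses (ii)--(iv)) to produce feasible points close to a given $x$ near $\bar x$, then transfer the local minimality of $\bar x$ over $\mathcal{R}$ back to $x$ via the local Lipschitz continuity of $\varphi$. The only structural difference is that the paper splits into two cases according to whether $\excf{\Phi}{\Psi}(x)$ exceeds $\tfrac{r_0}{2}(\alpha-\beta)$, disposing of the first case by a direct Lipschitz estimate between $x$ and $\bar x$; your observation that $\dist{x}{\mathcal{R}}\le d(x,\bar x)\le\delta$ (since $\bar x\in\mathcal{R}$) forces the feasible approximants $x_n$ into the reference neighbourhood automatically, which makes that case distinction unnecessary and slightly streamlines the bookkeeping.
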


\begin{proof}
According to the hypothesis (i) there exists $r_\varphi>0$
such that
\begin{eqnarray}     \label{in:locLipvarphi}
   |\varphi(x_1)-\varphi(x_2)|\le l_\varphi d(x_1,x_2),\quad\forall
   x_1,\, x_2\in\ball{\bar x}{r_\varphi}.
\end{eqnarray}
Since $\bar x\in {\mathcal R}$ is a local solution to problem
$({\mathcal P})$, there exists $r_0>0$ such that
$$
   \varphi(x)\ge\varphi(\bar x),\quad\forall x\in\ball{\bar x}{r_0}
   \cap{\mathcal R}.
$$
Choose $\hat r>0$ in such a way that $\hat r<\min\{r_0/2,\, 
r_\varphi/2\}$. With this choice, let us show that for any
$l\ge{l_\varphi\over\alpha-\beta}$ it is true that
\begin{eqnarray}   \label{in:unconstminthesis}
   \varphi_l(\bar x)\le\varphi_l(x),\quad\forall x\in
   \ball{\bar x}{\hat r}.
\end{eqnarray}
In fact, this inequality trivially holds true if $x\in\ball{\bar x}{\hat r}
\cap{\mathcal R}$, so it remains to show its validity in the case
$x\in\ball{\bar x}{\hat r}\backslash{\mathcal R}$. For those $x$
for which it is $\excf{\Phi}{\Psi}(x)\ge{r_0\over 2}(\alpha-\beta)$, on
account of inequality $(\ref{in:locLipvarphi})$ it results in
\begin{eqnarray*}
  \varphi(x) &\ge& \varphi(\bar x)-l_\varphi d(x,\bar x)\ge
  \varphi(\bar x)-l_\varphi\hat r>\varphi(\bar x)-l_\varphi
  {r_0\over 2}  \\
 &\ge& \varphi(\bar x)-l_\varphi{\excf{\Phi}{\Psi}(x)\over
  \alpha-\beta}\ge\varphi(\bar x)-l\excf{\Phi}{\Psi}(x),
\end{eqnarray*}
which gives $\varphi_l(\bar x)\le\varphi_l(x)$.
On the other hand,
for those $x$ for which it is $\excf{\Phi}{\Psi}(x)<{r_0\over 2}
(\alpha-\beta)$, it is possible to find $\epsilon_0>0$ such that
${\excf{\Phi}{\Psi}\over\alpha-\beta}(1+\epsilon_0)\le{r_0\over 2}$.
So, take an arbitrary $\epsilon\in (0,\epsilon_0)$. Owing to
the error bound estimate in $(\ref{in:Incdist})$, which
can be employed under the hypotheses currently in force, one
deduces the existence of $x_\epsilon\in{\mathcal R}$ such that
\begin{eqnarray}    \label{in:projR}
   d(x,x_\epsilon)<{\excf{\Phi}{\Psi}(x)\over\alpha-\beta}(1+\epsilon).
\end{eqnarray}
Observe that
$$
   d(x_\epsilon,\bar x)\le d(x_\epsilon,x)+d(x,\bar x)<
  {\excf{\Phi}{\Psi}(x)\over\alpha-\beta}(1+\epsilon)+
  {r_0\over 2}<r_0.
$$
Therefore, it is $x_\epsilon\in\ball{\bar x}{r_0}\cap{\mathcal R}$,
what allows one to infer that $\varphi(x_\epsilon)\ge\varphi
(\bar x)$. By using again inequality  $(\ref{in:locLipvarphi})$,
this time together with $(\ref{in:projR})$, one finds
$$
   \varphi(x)\ge\varphi(x_\epsilon)-l_\varphi d(x,x_\epsilon)
   >\varphi(\bar x)-l_\varphi{\excf{\Phi}{\Psi}(x)\over
   \alpha-\beta}(1+\epsilon).
$$
By passing to the limit as $\epsilon\to 0^+$, one readily obtains
inequality $(\ref{in:unconstminthesis})$, which was to be proved.
\end{proof}

By exploiting again the above error bound estimate for set-inclusion points,
it is possible to establish a sort of converse of the last result, which
is valid for global solutions.

\begin{proposition}
Let problem $({\mathcal P})$ admit global solutions. Suppose that:

\begin{enumerate}

\item[(i)] $\varphi$ is Lipschitz on $X$ with constant $l_\varphi>0$;

\item[(ii)] $(X,d)$ is metrically complete;

\item[(iii)] $\Psi$ is Hausdorff u.s.c. and set-covering on $X$,
with constant $\alpha>0$;

\item[(iv)] $\Phi$ is Lipschitz on $X$ with constant $\beta
\in [0,\alpha)$.

\end{enumerate}

\noindent Fix $\epsilon>0$ and set $l_\epsilon={(1+\epsilon)l_\varphi\over
\alpha-\beta}$. If $\hat x$ is a strict global solution to problem
$({\mathcal P}_{l_\epsilon})$, then $\hat x$ globally solves also $({\mathcal P})$.
\end{proposition}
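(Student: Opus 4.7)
The plan is to first establish feasibility, i.e.\ $\hat x\in{\mathcal R}$, and then conclude global optimality for $({\mathcal P})$ by a trivial comparison on ${\mathcal R}$. Fix any global solution $\bar x$ of $({\mathcal P})$ (which exists by assumption); since $\bar x\in{\mathcal R}$ one has $\excf{\Phi}{\Psi}(\bar x)=0$, and hence $\varphi_{l_\epsilon}(\bar x)=\varphi(\bar x)$. This anchors the comparison: any value $\varphi_{l_\epsilon}(\hat x)$ can be bounded above by $\varphi(\bar x)$ using the global optimality of $\hat x$ for $({\mathcal P}_{l_\epsilon})$.

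The heart of the argument is showing $\hat x\in{\mathcal R}$, and it is here that the error bound supplied by Theorem \ref{thm:incpoint} is indispensable. Arguing by contradiction, suppose $\delta:=\excf{\Phi}{\Psi}(\hat x)>0$. Since hypotheses (ii)--(iv) are exactly the assumptions of Theorem \ref{thm:incpoint}, one obtains $\dist{\hat x}{{\mathcal R}}\le\delta/(\alpha-\beta)$. Thus for every $\eta>0$ one can pick $x_\eta\in{\mathcal R}$ with $d(\hat x,x_\eta)\le\delta/(\alpha-\beta)+\eta$, and clearly $x_\eta\neq\hat x$ because $\hat x\notin{\mathcal R}$. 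Using $\excf{\Phi}{\Psi}(x_\eta)=0$, the Lipschitz continuity of $\varphi$ with constant $l_\varphi$, and the definition of $l_\epsilon$, a direct computation gives
\begin{eqnarray*}
\varphi_{l_\epsilon}(x_\eta)-\varphi_{l_\epsilon}(\hat x) & = & \varphi(x_\eta)-\varphi(\hat x)-l_\epsilon\,\delta \\
& \le & l_\varphi\, d(\hat x,x_\eta)-\frac{(1+\epsilon)l_\varphi}{\alpha-\beta}\,\delta \\
& \le & l_\varphi\eta-\frac{\epsilon\, l_\varphi\,\delta}{\alpha-\beta}.
\end{eqnarray*}
Choosing $\eta<\epsilon\,\delta/(\alpha-\beta)$ makes the right-hand side strictly negative, so that $\varphi_{l_\epsilon}(x_\eta)<\varphi_{l_\epsilon}(\hat x)$, contradicting the global minimality of $\hat x$ for $({\mathcal P}_{l_\epsilon})$. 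Consequently $\delta=0$, and by closedness of $\Psi(\hat x)$ this means $\Phi(\hat x)\subseteq\Psi(\hat x)$, i.e.\ $\hat x\in{\mathcal R}$.

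Once feasibility is secured, the rest is immediate: $\varphi_{l_\epsilon}(\hat x)=\varphi(\hat x)$, and for every $x\in{\mathcal R}$ one likewise has $\varphi_{l_\epsilon}(x)=\varphi(x)$, so the global minimality of $\hat x$ for $({\mathcal P}_{l_\epsilon})$ yields $\varphi(\hat x)\le\varphi(x)$ for all $x\in{\mathcal R}$, which is exactly the assertion that $\hat x$ globally solves $({\mathcal P})$.

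The decisive step is the contradiction in the second paragraph, which is made possible by the factor $(1+\epsilon)$ built into $l_\epsilon$: it is precisely this factor that creates a strict gap $\epsilon l_\varphi\delta/(\alpha-\beta)$ between the Lipschitz upper bound on $\varphi(x_\eta)-\varphi(\hat x)$ and the penalty offset $l_\epsilon\delta$. Incidentally, the argument does not seem to exploit the \emph{strictness} of $\hat x$ as a minimizer of $\varphi_{l_\epsilon}$, since the strict inequality is produced by the error bound alone; the hypothesis could apparently be weakened to mere global optimality.
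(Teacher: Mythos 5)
Your proof is correct, but it follows a genuinely different route from the paper's. The paper fixes a global solution $\bar x$ of $({\mathcal P})$ and a near-projection $x_\epsilon\in{\mathcal R}$ of $\hat x$ satisfying $d(x_\epsilon,\hat x)\le\frac{1+\epsilon}{\alpha-\beta}\exc{\Phi(\hat x)}{\Psi(\hat x)}$, then threads the chain
$\varphi(\bar x)=\varphi_{l_\epsilon}(\bar x)\ge\varphi_{l_\epsilon}(\hat x)\ge\varphi(x_\epsilon)\ge\varphi(\bar x)$,
forcing all inequalities to be equalities; it is only at the last step, to conclude $\hat x=x_\epsilon\in{\mathcal R}$ from $\varphi_{l_\epsilon}(\hat x)=\varphi_{l_\epsilon}(x_\epsilon)$, that the \emph{strictness} of $\hat x$ is invoked. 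You instead isolate feasibility as the key claim and refute $\excf{\Phi}{\Psi}(\hat x)>0$ directly: the error bound of Theorem 3.3 produces a feasible $x_\eta$ close enough to $\hat x$ that the $(1+\epsilon)$ slack in $l_\epsilon$ yields the \emph{strict} inequality $\varphi_{l_\epsilon}(x_\eta)<\varphi_{l_\epsilon}(\hat x)$, contradicting mere (non-strict) global optimality. Your computation is sound, and your closing observation is correct and worth noting: your argument needs neither the strictness of $\hat x$ nor, in fact, the assumed existence of global solutions to $({\mathcal P})$ (the anchor $\bar x$ in your first paragraph is never actually used — nonemptiness of ${\mathcal R}$ already follows from Theorem 3.3), so it proves a slightly stronger statement than the one in the paper. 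The paper's version buys nothing extra here except a proof that runs through a single displayed chain of inequalities; yours buys the weakening of two hypotheses at the cost of an explicit $\eta$-argument.
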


\begin{proof}
By virtue of the error bound estimate $(\ref{in:Incdist})$, corresponding
to any $\epsilon>0$ an element $x_\epsilon\in {\mathcal R}$ can be found
such that
$$
  d(x_\epsilon,\hat x)\le{1+\epsilon\over\alpha-\beta}\exc{\Phi(\hat x)}
  {\Psi(\hat x)}.
$$
Denote by $\bar x\in {\mathcal R}$ a global solution of $({\mathcal P})$.
As $\hat x$ solves problem $({\mathcal P}_{l_\epsilon})$, by the last
inequality and the Lipschitz continuity of $\varphi$, one obtains
\begin{eqnarray*}
    \varphi(\bar x) &=&\varphi_{l_\epsilon}(\bar x)\ge
    \varphi_{l_\epsilon}(\hat x)=\varphi(\hat x)+l_\epsilon\exc{\Phi(\hat x)}
  {\Psi(\hat x)}  \\
   &\ge & \varphi(x_\epsilon)-l_\varphi d(\hat x,x_\epsilon)+
        l_\epsilon\exc{\Phi(\hat x)}{\Psi(\hat x)} \ge\varphi
        (x_\epsilon)\ge\varphi(\bar x).
\end{eqnarray*}
The consequent fact that $\varphi_{l_\epsilon}(\hat x)=\varphi_{l_\epsilon}
(x_\epsilon)$, since $\hat x$ is strict as a global solution to
$({\mathcal P}_{l_\epsilon})$, entails that $\hat x=x_\epsilon$, so
that also $\hat x\in{\mathcal R}$. Thus, on account of the above inequalities,
it is possible to conclude that $\hat x$ is a global solution of
$({\mathcal P})$.
\end{proof}

Another approach to penalization methods in constrained optimization
rests upon the concept of problem calmness, which was introduced
by R.T. Rockafellar. This approach requires to regard a given problem as a
particular specialization of a class of parameterized problems.
In the case under study, the following class will be considered
$$
   \min\varphi(x) \quad\hbox{ subject to }\quad x\in
   {\mathcal R}(p)=\{x\in X:\ \Phi(p,x)\subseteq\Psi(p,x)\},
     \leqno({\mathcal P}_p)
$$
where the data $\Phi:P\times X\longrightarrow{\mathcal B}(Y)$
and $\Psi:P\times X\rightrightarrows Y$ now depend also on $p\in P$,
with $(P,d)$ denoting a metric space of parameters. Notice that,
unless suitable assumptions are introduced, one can not expect
in general that $\dom{\mathcal R}=P$. With respect
to this problem parameterization, the penalty functional
$\varphi_l:P\times X\longrightarrow\R\cup\{\pm\infty\}$ becomes
$$
   \varphi_l(p,x)=\varphi(x)+l\cdot\exc{\Phi(p,x)}{\Psi(p,x)}.
$$

\begin{definition}
Let $\bar p\in P$ and let $\bar x\in{\mathcal R}(\bar p)$ be a
local minimizer of the problem $({\mathcal P}_{\bar p})$.
Problem $({\mathcal P}_{\bar p})$ is called {\it calm} at $\bar p$
if there exist positive real constants $r$ and $\zeta$ such that
$$
   \varphi(x)\ge\varphi(\bar x)-\zeta d(p,\bar p),\quad\forall
   x\in\ball{\bar x}{r}\cap{\mathcal R}(\bar p),\ \forall
   p\in\ball{\bar p}{r}.
$$
\end{definition}

Appeared firstly in \cite{Clar76}, since then the above property
become a fundamental regularity condition pervading the
study of the sensitivity behaviour of variational problems,
in the presence of perturbations (see \cite{RocWet98}).
In the present context,
the introduction of problem calmness allows one to avoid
the assumption of Lipschitz continuity on the objective
functional. The price to be paid for enlarging the class of
problems, to which the penalization technique can be applied,
consists in a regularity requirement on the feasible region of
the problem class, formalized as follows.

\begin{definition}    \label{def:unisemireg}
A set-valued mapping $\Xi:P\rightrightarrows X$ between metric
spaces is said to be
{\it semiregular} at $\bar p\in P$, {\it uniformly over} $\Xi(\bar p)$, if
if there exist positive real constants $r$ and $\kappa$ such that
\begin{eqnarray}     \label{in:unisemireg}
    \dist{\bar p}{\Xi^{-1}(x)}\le\kappa d(x,\bar x),\quad\forall
   x\in\ball{\bar x}{r},\ \forall \bar x\in\Xi(\bar p).
\end{eqnarray}
\end{definition}

The property formulated in Definition \ref{def:unisemireg}
is an enhanced version of a regularity notion that, to the best
of the author's knowledge, was introduced in \cite{Krug09}.
The latter is known to correspond to the well-known Lipschitz lower
semicontinuity property for the inverse mapping of $\Xi$.

\begin{remark}    \label{rem:semiregreform}
It is readily seen that, whenever $\Xi:P\rightrightarrows X$ is
semiregular at $\bar p$, uniformly over $\Xi(\bar p)$, then it holds
\begin{eqnarray}    \label{in:unisemiregset}
    \dist{\bar p}{\Xi^{-1}(x)}\le\kappa \dist{x}{\Xi(\bar p)},\quad\forall
   x\in\ball{\Xi(\bar p)}{r/2}.
\end{eqnarray}
Indeed, taken $x\in\ball{\Xi(\bar p)}{r/2}\backslash\Xi(\bar p)$ and
any $\epsilon\in (0,1)$, there exists $\bar x_\epsilon\in\Xi(\bar p)$
such that
$$
   d(x,\bar x_\epsilon)<(1+\epsilon)\dist{x}{\Xi(\bar p)}<r.
$$
Then, inequality $(\ref{in:unisemireg})$ applies, so it results in
$$
    \dist{\bar p}{\Xi^{-1}(x)}\le\kappa d(x,\bar x_\epsilon)<
   \kappa(1+\epsilon)\dist{x}{\Xi(\bar p)},
$$
whence inequality $(\ref{in:unisemiregset})$ follows by arbitrariness
of $\epsilon\in (0,1)$. Actually, the validity of $(\ref{in:unisemiregset})$
is an equivalent reformulation of the uniform semiregularity of $\Xi$
at $\bar p$, as one checks immediately.
\end{remark}

One is now in a position to establish the next result about exact
penalization.

\begin{theorem}     \label{thm:calmexpen}
With reference to a parameterized family of problems $({\mathcal P}_p)$,
let $\bar x\in{\mathcal R}(\bar p)$ be a local minimizer of problem
$({\mathcal P}_{\bar p})$. Suppose that:

\begin{enumerate}

\item[(i)] $(X,d)$ is metrically complete;

\item[(ii)] $\varphi$ is l.s.c. at $\bar x$;

\item[(iii)] ${\mathcal R}:P\rightrightarrows X$ is semiregular
at $\bar p$, uniformly over ${\mathcal R}(\bar p)$;

\item[(iv)] problem $({\mathcal P}_{\bar p})$ is calm at $\bar p$;

\end{enumerate}

\noindent and there exists $\delta>0$ such that:

\begin{enumerate}

\item[(v)] $\Psi(p,\cdot):X\rightrightarrows Y$ is Hausdorff u.s.c.
and set-covering on $X$ with constant $\alpha_p>0$, for each
$p\in\ball{\bar p}{\delta}$;

\item[(vi)] $\Phi(p,\cdot):X\longrightarrow {\mathcal B}(Y)$ is
Lipschitz on $X$ with constant $\beta_p\in (0,\alpha_p)$, for each
$p\in\ball{\bar p}{\delta}$.

\end{enumerate}

\noindent Then, there exists $l>0$ such that the penalty functional
$\varphi_l(\bar p,\cdot)$ is exact at $\bar x$.
\end{theorem}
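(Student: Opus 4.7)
The plan is to combine three ingredients in order to pin down the penalty constant $l\ge\zeta\kappa/(\alpha_{\bar p}-\beta_{\bar p})$, where $\zeta$ is the calmness constant and $\kappa$ is the uniform semiregularity constant: the error bound furnished by Theorem \ref{thm:incpoint}, applied at the fixed parameter $p=\bar p$ (where hypotheses (v)--(vi) together with completeness (i) make it applicable); the reformulation of uniform semiregularity recorded in Remark \ref{rem:semiregreform}; and the calmness inequality of $(\mathcal{P}_{\bar p})$ at $\bar p$. My first step would be to fix a radius $\hat r>0$ small enough to simultaneously (a) lie inside the local minimality neighbourhood of $\bar x$ in $(\mathcal{P}_{\bar p})$, (b) satisfy $\hat r\le r_{\mathrm{sem}}/2$, so that each $x\in\ball{\bar x}{\hat r}$ obeys $\dist{x}{\mathcal{R}(\bar p)}\le\hat r\le r_{\mathrm{sem}}/2$ and Remark \ref{rem:semiregreform} activates, (c) lie inside the $x$-radius of the calmness condition, and (d) satisfy $\hat r\le r_{\mathrm{cal}}/(2\kappa)$, so that the auxiliary parameters $p_\epsilon$ constructed below stay within the $p$-radius of calmness.

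The argument would then split into two cases. For $x\in\ball{\bar x}{\hat r}\cap\mathcal{R}(\bar p)$ the desired inequality $\varphi_l(\bar p,x)\ge\varphi(\bar x)$ is immediate: the excess term vanishes and local optimality of $\bar x$ in $(\mathcal{P}_{\bar p})$ yields $\varphi(x)\ge\varphi(\bar x)$. For $x\in\ball{\bar x}{\hat r}\setminus\mathcal{R}(\bar p)$, Theorem \ref{thm:incpoint} supplies
$$
\dist{x}{\mathcal{R}(\bar p)}\le\frac{\exc{\Phi(\bar p,x)}{\Psi(\bar p,x)}}{\alpha_{\bar p}-\beta_{\bar p}}.
$$
By the reformulation in Remark \ref{rem:semiregreform}, for each $\epsilon\in(0,1)$ there is a parameter $p_\epsilon$ with $x\in\mathcal{R}(p_\epsilon)$ and $d(p_\epsilon,\bar p)\le\kappa(1+\epsilon)\dist{x}{\mathcal{R}(\bar p)}$. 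Because of the calibration of $\hat r$, $p_\epsilon$ lies in the calmness neighbourhood of $\bar p$, so calmness yields
$$
\varphi(x)\ge\varphi(\bar x)-\zeta d(p_\epsilon,\bar p)\ge\varphi(\bar x)-\zeta\kappa(1+\epsilon)\dist{x}{\mathcal{R}(\bar p)}.
$$
Letting $\epsilon\to 0^+$ and substituting the error bound leads to
$$
\varphi(x)\ge\varphi(\bar x)-\frac{\zeta\kappa}{\alpha_{\bar p}-\beta_{\bar p}}\exc{\Phi(\bar p,x)}{\Psi(\bar p,x)},
$$
which rearranges to $\varphi_l(\bar p,x)\ge\varphi(\bar x)=\varphi_l(\bar p,\bar x)$ as soon as $l\ge\zeta\kappa/(\alpha_{\bar p}-\beta_{\bar p})$.

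The main obstacle will be the simultaneous calibration of $\hat r$: forcing $p_\epsilon$ into the $p$-radius of the calmness condition uniformly in $\epsilon$ requires a sufficiently sharp a priori estimate of $\dist{x}{\mathcal{R}(\bar p)}$, and this is ultimately what ties together the local minimality radius, the two calmness radii, and the semiregularity radius. The parameterized hypotheses (v)--(vi) enter only through their values at $p=\bar p$ which feed the error bound, while the roles of lower semicontinuity of $\varphi$ and completeness of $(X,d)$ are, respectively, to make $\varphi(\bar x)$ a meaningful finite reference value and to legitimize the invocation of Theorem \ref{thm:incpoint}.
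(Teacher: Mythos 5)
Your argument is correct, and it takes a genuinely different route from the paper. The paper argues by contradiction: it assumes that $\varphi_l(\bar p,\cdot)$ fails to be exact for every $l>0$, extracts a sequence $x_n\to\bar x$ with $\varphi(x_n)+n\cdot\exc{\Phi(\bar p,x_n)}{\Psi(\bar p,x_n)}<\varphi(\bar x)$, uses the lower semicontinuity of $\varphi$ to show that the excess terms tend to $0$, then feeds the error bound of Theorem \ref{thm:incpoint} into the semiregularity estimate to produce parameters $p_n\to\bar p$ with $x_n\in{\mathcal R}(p_n)$, and finally contradicts calmness. You instead run the same three ingredients forward and obtain a direct proof with the explicit threshold $l\ge\zeta\kappa/(\alpha_{\bar p}-\beta_{\bar p})$, which the contradiction argument does not exhibit. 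A further payoff of your route is that hypothesis (ii) becomes superfluous: the paper needs the lower semicontinuity of $\varphi$ only to force $\exc{\Phi(\bar p,x_n)}{\Psi(\bar p,x_n)}\to 0$ and hence $p_n\to\bar p$, whereas you keep $d(p_\epsilon,\bar p)$ inside the calmness radius purely by the a priori bound $\dist{x}{{\mathcal R}(\bar p)}\le d(x,\bar x)\le\hat r$ and the calibration $\hat r\le r_{\rm cal}/(2\kappa)$ --- which is exactly the ``obstacle'' you correctly identified and resolved. Two small remarks: both your proof and the paper's read the calmness inequality with $x\in\ball{\bar x}{r}\cap{\mathcal R}(p)$ rather than the ${\mathcal R}(\bar p)$ printed in the definition (evidently a typo, since the definition as printed is vacuous given local optimality); and when $\dist{\bar p}{{\mathcal R}^{-1}(x)}=0$ the multiplicative $(1+\epsilon)$ selection should be replaced by an additive one, but the limit $\epsilon\to 0^+$ handles that case identically.
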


\begin{proof}
Let us start with noting that, under the current assumptions, by
virtue of Theorem \ref{thm:incpoint} it is $\dom{\mathcal R}\supseteq
\ball{\bar p}{\delta}$ and the following estimate holds
\begin{eqnarray}    \label{in:erbofeasparam}
  \qquad\qquad\dist{x}{{\mathcal R}(p)}\le{\exc{\Phi(p,x)}{\Psi(p,x)}
   \over\alpha_p-\beta_p},
  \quad\forall x\in\ X,\ \forall p\in\ball{\bar p}{\delta}.
\end{eqnarray}
Recall that, according to what has been noticed in Remark \ref{rem:Incclosed},
the mapping ${\mathcal R}$ is closed valued. By hypothesis (iii), in the
light of Remark \ref{rem:semiregreform}, there exist $r>0$ and
$\kappa>0$ such that
$$
    \dist{\bar p}{{\mathcal R}^{-1}(x)}\le\kappa
   \dist{x}{{\mathcal R}(\bar p)},\quad\forall x\in\ball{{\mathcal R}(\bar p)}
   {r}.
$$
From the last inequality, on the account of the estimate
$(\ref{in:erbofeasparam})$, it follows
\begin{eqnarray}    \label{in:erbopenal}
   \dist{\bar p}{{\mathcal R}^{-1}(x)}\le
   {\kappa\over\alpha_{\bar p}-\beta_{\bar p}}
   \exc{\Phi(\bar p,x)}{\Psi(\bar p,x)}, \\ \nonumber
    \forall x\in\ball{{\mathcal R}(\bar p)}{r}.
\end{eqnarray}
Assume now, ab absurdo, that for each $l>0$ the penalty functional
$\varphi_l(\bar p,\cdot)$ fails to be exact at $\bar x$, namely for each $l>0$
there exists $n\in\N$, with $n>l$, and $x_n\in\ball{\bar x}{1/n}$
such that
\begin{eqnarray}    \label{in:absurd}
     \varphi(x_n)+n\cdot\exc{\Phi(\bar p,x_n)}{\Psi(\bar p,x_n)}
    <\varphi(\bar x).
\end{eqnarray}
Since $\bar x$ is a local solution to problem $({\mathcal P}_{\bar p})$,
for each $n\in\N$ larger than a proper natural number it must be
$x_n\not\in{\mathcal R}(\bar p)$, that is, as multifunctions take
closed values, $\exc{\Phi(\bar p,x_n)}
{\Psi(\bar p,x_n)}>0$. Moreover, by virtue of the lower semicontinuity
of $\varphi$ at $\bar x$ and of the fact that the sequence
$(x_n)_{n\in\N}$ converges to $\bar x$ as $n\to\infty$, from inequality
$(\ref{in:absurd})$ one obtains
\begin{eqnarray*} 
    \limsup_{n\to\infty}n\cdot\exc{\Phi(\bar p,x_n)}{\Psi(\bar p,x_n)} &\le&
    \limsup_{n\to\infty}\ [\varphi(\bar x)-\varphi(x_n)]  \\
    &=&\varphi(\bar x)-\liminf_{n\to\infty}\varphi(x_n)\le 0.
\end{eqnarray*} 
Then one deduces that
\begin{eqnarray}    \label{eq:converxn}
  \lim_{n\to\infty}\ \exc{\Phi(\bar p,x_n)}{\Psi(\bar p,x_n)}=0.
\end{eqnarray}
Since, as already observed, $x_n\longrightarrow\bar x$ as $n\to\infty$,
it is possible to assume without loss of generality that $x_n\in
\ball{\bar x}{r}$, and hence $x_n\in\ball{{\mathcal R}(\bar p)}{r}$.
This fact enables one to apply inequality $(\ref{in:erbopenal})$,
from which one obtains
$$
   \dist{\bar p}{{\mathcal R}^{-1}(x_n)}\le
   {\kappa\over\alpha_{\bar p}-\beta_{\bar p}}
   \exc{\Phi(\bar p,x_n)}{\Psi(\bar p,x_n)}.
$$
This means that, corresponding to a costant $\tilde\kappa>\kappa$,
a suitable $p_n\in {\mathcal R}^{-1}(x_n)$ can be found, such that
the inequality
\begin{eqnarray}    \label{in:pestimerbo}
    d(p_n,\bar p)<{\tilde\kappa\over\alpha_{\bar p}-\beta_{\bar p}}
   \exc{\Phi(\bar p,x_n)}{\Psi(\bar p,x_n)}
\end{eqnarray}
holds for every $n\in\N$ large enough. Observe that, as $x_n\in
{\mathcal R}(p_n)$ and $x_n\not\in{\mathcal R}(\bar p)$, then it
must be $p_n\ne\bar p$. By combining inequalities $(\ref{in:pestimerbo})$
and $(\ref{in:absurd})$, one finds
$$
    {\tilde\kappa\over\alpha_{\bar p}-\beta_{\bar p}}\cdot
    {\varphi(x_n)-\varphi(\bar x)\over d(p_n,\bar p)}\le
    {\varphi(x_n)-\varphi(\bar x)\over\exc{\Phi(\bar p,x_n)}{\Psi(\bar p,x_n)}}
    <-n,
$$
wherefrom, for $x_n\in\ball{\bar x}{r}\cap{\mathcal R}(p_n)$ and
every $n\in\N$ large enough, one gets
\begin{eqnarray}   \label{in:contradict}
   \varphi(x_n)<\varphi(\bar x)- {n(\alpha_{\bar p}-\beta_{\bar p})
   \over\tilde\kappa}d(p_n,\bar p).
\end{eqnarray}
Notice that, owing to inequality $(\ref{eq:converxn})$, the estimate
in $(\ref{in:pestimerbo})$ entails that $p_n\longrightarrow\bar p$ as
$n\to\infty$. Consequently, inequality $(\ref{in:contradict})$ contradicts
the hypothesis (iv) about the calmness at $\bar x$ of problem
$({\mathcal P}_{\bar p})$. Thus, the proof is complete.
\end{proof}

\begin{remark}
Among the hypotheses of Theorem \ref{thm:calmexpen}, (iii) and (iv)
are not directly formulated in terms of problem data. Conditions
for mapping ${\mathcal R}$ to be uniformly regular at $\bar p$
can be derived by working, under proper assumptions on $\Phi$ and $\Psi$,
the general characterization for the semiregularity of a mapping
$\Xi:P\rightrightarrows X$, which is expressed by the positivity
of the constant
$$
     \vartheta[\Xi](\bar p,\bar x)=\liminf_{x\to\bar x}
     {\dist{x}{\Xi(\bar p)}\over\dist{\bar p}{\Xi^{-1}(x)}}
$$
(see \cite{Krug09}).
A sufficient condition for a parameterized problem $({\mathcal P}_{\bar p})$
to be calm can be expressed in terms of calmness from below of the
related value function $\nu:P\longrightarrow\R\cup\{\pm\infty\}$,
defined as
$$
    \nu(p)=\inf_{x\in{\mathcal R}(p)}\varphi(x).
$$
More precisely, if $\bar x\in{\mathcal R}(\bar p)$ is a global
solution to problem $({\mathcal P}_{\bar p})$, then $({\mathcal P}_{\bar p})$
is calm $\bar x$ povided that $\nu$ is calm from below at $\bar p$, i.e.
$$
    \liminf_{p\to\bar p}{\nu(p)-\nu(\bar p)\over d(p,\bar p)}>-\infty
$$
(see, for more details, \cite{Uder10}).
\end{remark}

Theorem \ref{thm:calmexpen} provides a sufficient condition for the
exactness of the penalty functional, where problem calmness plays
a crucial role. In order to enlighten the intriguing connection between
these two properties, this subsection is concluded by a proposition
that, in the setting under examination, singles out certain conditions
upon which from exact penalization it is possible to derive
problem calmness.

\begin{proposition}
With reference to a parameterized family of problems $({\mathcal P}_p)$,
let $\bar x\in {\mathcal R}(\bar p)$ be a local minimizer of
$({\mathcal P}_{\bar p})$. Suppose that:

\begin{enumerate}

\item[(i)] mapping $\Phi:P\times X\longrightarrow{\mathcal B}(Y)$
is locally Lipschitz around $(\bar p,\bar x)$;

\item[(ii)] mapping $\Psi$ is partially Lipschitz u.s.c. at $\bar p$,
uniformly in $x$, i.e. there exist $r>0$ and $\zeta>0$ such that
$$
    \Psi(p,x)\subseteq\ball{\Psi(\bar p,x)}{\zeta d(p,\bar p)},
    \quad\forall p\in\ball{\bar p}{\zeta},\ \forall x\in\ball{\bar x}{\zeta};
$$

\item[(iii)] there exists $l>0$ such that $\varphi_l(\bar p,\cdot)$
is an exact penalty functional.

\end{enumerate}

\noindent Then, problem $({\mathcal P}_{\bar p})$ is calm
at $\bar x$.
\end{proposition}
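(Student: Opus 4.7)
The plan is to transcribe the exact penalization inequality into the calmness estimate by showing that, for $x$ feasible in the perturbed problem $({\mathcal P}_p)$ and $x$ close to $\bar x$, the unperturbed excess $\exc{\Phi(\bar p,x)}{\Psi(\bar p,x)}$ is bounded linearly by $d(p,\bar p)$. Hypothesis (iii) supplies the one-sided estimate
$$
   \varphi(\bar x)\le\varphi(x)+l\cdot\exc{\Phi(\bar p,x)}{\Psi(\bar p,x)}
$$
on some neighbourhood of $\bar x$, so if such a linear bound is available with constant $C$, the calmness inequality follows at once with modulus $lC$. Everything else is bookkeeping of constants on a common neighbourhood.

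The key decomposition I would use is the triangle inequality for the excess functional,
$$
   \exc{\Phi(\bar p,x)}{\Psi(\bar p,x)}\le\exc{\Phi(\bar p,x)}{\Phi(p,x)}+\exc{\Phi(p,x)}{\Psi(\bar p,x)},
$$
which reduces matters to estimating the two summands separately. The first summand is controlled by hypothesis (i): the local Lipschitz property of $\Phi$ around $(\bar p,\bar x)$ yields a constant $l_\Phi>0$ such that $\exc{\Phi(\bar p,x)}{\Phi(p,x)}\le l_\Phi\, d(p,\bar p)$ on a neighbourhood. For the second summand, feasibility $x\in{\mathcal R}(p)$ gives $\Phi(p,x)\subseteq\Psi(p,x)$, and then hypothesis (ii) upgrades this to $\Phi(p,x)\subseteq\ball{\Psi(\bar p,x)}{\zeta d(p,\bar p)}$, so $\exc{\Phi(p,x)}{\Psi(\bar p,x)}\le\zeta\, d(p,\bar p)$. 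Summing, one obtains
$$
   \exc{\Phi(\bar p,x)}{\Psi(\bar p,x)}\le(l_\Phi+\zeta)\, d(p,\bar p).
$$

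Inserting this into the exact penalization inequality gives $\varphi(x)\ge\varphi(\bar x)-l(l_\Phi+\zeta)\, d(p,\bar p)$, which is precisely the calmness condition with modulus $l(l_\Phi+\zeta)$. The only care required is to fix a single radius $r>0$ small enough so that the exactness window of (iii), the local Lipschitz window of $\Phi$ in (i), and the uniform partial upper-semicontinuity window of $\Psi$ in (ii) all apply simultaneously for $p\in\ball{\bar p}{r}$ and $x\in\ball{\bar x}{r}\cap{\mathcal R}(p)$. I do not foresee a genuine obstacle; the only conceptual step is recognising that feasibility at $p$, combined with the partial Lipschitz behaviour of $\Psi$, is exactly the mechanism transporting the $p$-perturbation into a bound on the $\bar p$-excess that exact penalization already knows how to consume.
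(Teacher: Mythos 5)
Your proposal is correct and uses the same key mechanism as the paper: for $x\in{\mathcal R}(p)$ near $\bar x$, the bound $\exc{\Phi(\bar p,x)}{\Psi(\bar p,x)}\le(l_\Phi+\zeta)\,d(p,\bar p)$ obtained from feasibility together with hypotheses (i) and (ii), fed into the exactness inequality. The only difference is presentational --- the paper packages this as a reductio ad absurdum with sequences $p_n,x_n$, whereas you run the estimate directly and read off the calmness modulus $l(l_\Phi+\zeta)$, which is arguably cleaner.
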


\begin{proof}
The thesis can be proved again by a reductio ad absurdum.
So assume $({\mathcal P}_{\bar p})$ to be not calm at $\bar x$. This
amounts to say that  for each $n\in\N$ it is possible to find $p_n
\in\ball{\bar p}{1/n}\backslash\{\bar p\}$ and $x_n\in {\mathcal R}
(p_n)\cap\ball{\bar x}{1/n}$ such that
\begin{eqnarray}        \label{in:notcalmabsurd}
     \varphi(x_n)<\varphi(\bar x)-n d(p_n,\bar p).
\end{eqnarray}
By hypothesis (ii), since $p_n\to\bar p$ and $x_n\to\bar x$ as
$n\to\infty$, one has
\begin{eqnarray}      \label{in:Lusc}
    \exc{\Psi(p_n,x_n)}{\Psi(\bar p,x_n)}\le\zeta d(p_n,\bar p),
\end{eqnarray}
for each $n\in\N$ large enough.
On the other hand, since $\Phi$ is locally Lipschitz around
$(\bar p,\bar x)$, for some $\tau>0$ it is true that
\begin{eqnarray}    \label{in:locLip}
     \exc{\Phi(\bar p,x_n)}{\Phi(p_n,x_n)}\le\tau d(p_n,\bar p),
\end{eqnarray}
for every $n\in\N$ large enough. By recalling that $x_n\in
{\mathcal R}(p_n)$, so that $\Phi(p_n,x_n)\subseteq\Psi(p_n,x_n)$,
from inequalities $(\ref{in:locLip})$ and $(\ref{in:Lusc})$ one
obtains
\begin{eqnarray*}
    \exc{\Phi(\bar p,x_n)}{\Psi(\bar p,x_n)} &\le&  \exc{\Phi(\bar p,x_n)}{\Phi(p_n,x_n)} \\
        &+ &\exc{\Phi(p_n,x_n)}{\Psi(p_n,x_n)}  \\
        &+ &\exc{\Psi(p_n,x_n)}{\Psi(\bar p,x_n)} \\
        &\le & (\tau+\zeta)d(p_n,\bar p).
\end{eqnarray*}
By combining the above estimate with inequality $(\ref{in:notcalmabsurd})$,
one finds
$$
    \varphi(x_n)<\varphi(\bar x)-{n\over\tau+\zeta}\exc{\Phi(\bar p,x_n)}{\Psi(\bar p,x_n)}.
$$
As the last inequality is true for each $n\in\N$ larger than a
proper natural and for the corresponding $x_n\in\ball{\bar x}{1/n}$,
the hypothesis (iii)
about the existence of an exact penalty functional turns out to
be contradicted. Thus, the argument by contradiction is complete.
\end{proof}



\begin{thebibliography}{99}


\bibitem{Arut07} A.V. Arutyunov, {\it Covering mappings in metric
spaces, and fixed points}, Dokl. Akad. Nauk {\bf 416} (2007),
no. 2, 151--155 [in Russian].

\bibitem{ArAvGeDmOb09} A.V. Arutyunov, E.R. Avakov, B.D. Gel'man, 
A.V. Dmitruk, and V.V. Obukhovskii, {\it Locally covering maps in
metric spaces and coincidence points}, J. Fixed Point Theory
Appl. {\bf 5} (2009), 105--127.

\bibitem{AubFra90}  J.-P. Aubin and H. Frankowska, {\it Set-valued
analysis}, Birkh\"auser Boston, Inc., Boston, MA, 1990.

\bibitem{Bana32} S. Banach, {\it Th\'eorie des op\'erations lin\'eaires},
Warsaw, 1932 [in French].

\bibitem{BorZhu88} J. M. Borwein and D. M. Zhuang, {\it Verifiable
necessary and sufficient conditions for openness and regularity for
set-valued and single-valued maps} J. Math. Anal. Appl. {\bf 134} (1988),
441--459.

\bibitem{Clar76} F.H. Clarke, {\it A new approach to Lagrange
multipliers}, Math. Oper. Res. {\bf 1} (1976), 165--174. 

\bibitem{Dmit05} A.V. Dmitruk, {\it On nonlocal metric regularity
of nonlinear operators}, Control and Cybernetics {\bf 34} (2005),
723--746.

\bibitem{DmMiOs80} A.V. Dmitruk, A.A. Milyutin and N.P.
Osmolovsk\u\i i, {\it Lyusternik's theorem and the theory
of extrema}, Russian Math. Surveys {\bf 35} (1980), 11--51.

\bibitem{DonRoc14} A.L. Dontchev and R.T. Rockafellar, {\it Implicit
functions and solution mappings. A view from variational analysis},
Springer, New York, 2014.

\bibitem{Fran87} H. Frankowska, {\it An open mapping principle for set-valued
maps}, J. Math. Anal. Appl. {\bf 127} (1987), 172--180.

\bibitem{Grav50} L.M. Graves, {\it Some mapping theorems}, Duke
Math. J. {\bf 17} (1950), 111--114.

\bibitem{Ioff00} A.D. Ioffe, {\it Metric regularity and
subdifferential calculus},  Russian Math. Surveys {\bf 55} (2000),
501--558.

\bibitem{Ioff14} A.D. Ioffe, {\it Metric regularity, fixed points and
some associated problems of variational analysis}, J. Fixed Point
Theory Appl. {\bf 15} (2014), 67--99.

\bibitem{Jahn86} J. Jahn, {\it Mathematical vector optimization
in partially ordered linear spaces}, Verlag Peter D. Lang, Frankfurt
am Main, 1986.

\bibitem{Krug09} A.Y. Kruger, {\it About stationarity and regularity
in variational analysis}, Taiwanese J. Math. {\bf 13} (2009), 1737--1785.

\bibitem{Lyus34} L.A. Lyusternik, {\it On the conditional extrema
of functionals}, Mat. Sbornik {\bf 41} (1934), 390--401.

\bibitem{Mord93} B.S. Mordukhovich, {\it Complete characterization
of openness, metric regularity, and Lipschitzian properties of
multifunctions}, Trans. Amer. Math. Soc. {\bf 340} (1993), 1--35.

\bibitem{Mord06} B.S. Mordukhovich, {\it Variational Analysis and
Generalized Differentiation I: Basic Theory}, Springer, Berlin,
2006.

\bibitem{MorSha95} B.S. Mordukhovich and Y.H. Shao, {\it
Differential characterizations of covering, metric regularity,
and Lipschitzian properties of multifunctions between Banach
spaces}, Nonlinear Anal. {\bf 25} (1995), 1401--1424.

\bibitem{Peno89} J.-P. Penot, {\it Metric regularity, openness
and Lipschitzian behavior of multifunctions}, Nonlinear Anal.
{\bf 13} (1989), 629--643.

\bibitem{Ptak74} V. Pt\'ak, {\it A quantitative refinement of the
closed graph theorem}, Czechoslovak Math. J. {\bf 24} (1974),
503--506.

\bibitem{Robi72} S.M. Robinson, {\it Normed convex processes},
Trans. Amer. Math. Soc. {\bf 174} (1972), 127--140. 

\bibitem{Rock70}  R. T. Rockafellar, {\it Convex analysis}, Princeton
Univ. Press, Princeton, N. J., 1970.

\bibitem{RocWet98}  R. T. Rockafellar and R. J.-B. Wets, {\it
Variational analysis}, Springer-Verlag, Berlin, 1998.

\bibitem{Role87} S. Rolewicz, {\it Functional Analysis and Control
Theory}, D. Reidel Publishing Co., Dordrecht; PWN---Polish Scientific
Publishers, Warsaw, 1987. 

\bibitem{Uder10} A. Uderzo, {\it Exact penalty functions and calmness
for mathematical programming under nonlinear perturbations},
Nonlinear Anal. {\bf 73} (2010), 1596--1609.

\bibitem{YeYaKi98} N.D. Yen, J.-C. Yao, and B.T. Kien, {\it Covering
properties at positive-order rates of multifunctions and some
related topics}, J. Math. Anal. Appl. {\bf 338} (2008), 467--478.

\bibitem{Zasl10} A.J. Zaslavski, {\it Optimization on metric and
normed spaces}, Springer, New York, 2010.

\end{thebibliography}
\end{document}